\newcommand{\be}{\begin{enumerate}}
\newcommand{\ee}{\end{enumerate}}
\newcommand{\beq}{\begin{equation}}
\newcommand{\eeq}{\end{equation}}
\newcommand{\bea}{\begin{eqnarray}}
\newcommand{\eea}{\end{eqnarray}}
\newcommand{\beas}{\begin{eqnarray*}}
\newcommand{\eeas}{\end{eqnarray*}}
\newcommand{\sn}{\mathfrak{S}_n}
\newcommand{\covby}{\lessdot}
\def\({\left(}
\def\){\right)}
\theoremstyle{plain}
\newtheorem{theorem}{Theorem}[section]
\newtheorem{proposition}[theorem]{Proposition}
\newtheorem{lemma}[theorem]{Lemma}
\newtheorem{corollary}[theorem]{Corollary}
\theoremstyle{definition}
\newtheorem{example}[theorem]{Example}
\theoremstyle{remark}
\theoremstyle{remark}
\begin{document}

\title{
\bf The Weak Bruhat Order and Separable Permutations
\footnote{This research was carried out under the
    direction of R. Stanley when the author was an undergraduate at
    M.I.T.} 
    }
    \author{Fan Wei\footnote{fan\_wei@mit.edu, MIT}}
    \date{September 21, 2010}
\maketitle{\small \textsc{\bf Abstract}
In this paper we consider the rank generating function of a
separable permutation $\pi$ in the weak Bruhat order on the two
intervals $[\text{id}, \pi]$ and $[\pi, w_0]$, where $w_0 =
n,(n-1),\dots, 1$. We 
show a surprising result that the product of these two
generating functions is the generating function for the
symmetric group with the weak order. We then
obtain explicit formulas for the rank generating functions
on $[\text{id}, \pi]$ and $[\pi, w_0]$, which leads to
the rank-symmetry and unimodality of the two graded posets. }

\vskip 5mm

\section{Introduction and Definitions} \label{sec1}
Let $\sn$ denote the symmetric group of all permutations of
$1,2,\dots,n$. Define the \emph{length} of the permutation
$\pi=a_1a_2\cdots a_n\in\sn$ by $$ \ell(\pi)=\#\{ 1\leq i<j\leq
n\colon a_i>a_j\}, $$ which is the number of inversions of $\pi$. One
of the fundamental partial orderings of $\sn$ is the \emph{weak
  (Bruhat) order}. A cover relation $\pi\covby \sigma$ in weak order,
i.e., $\pi<\sigma$ and nothing is in between, is defined by
$\sigma=\pi s_i$ for some adjacent transposition $s_i=(i,i+1)$,
provided that $\ell(\sigma)> \ell(\pi)$.  We are multiplying
permutations right-to-left, so for instance $2413s_2=2143$. The weak
order makes $\sn$ into a graded poset of rank $\binom n2$. If
$\pi=a_1a_2\cdots a_n\in\sn$, then the rank function of $\sn$ (which
will have the weak order unless stated otherwise) is the function
$\ell$. The rank generating function is then given by 
$$ F(\sn,q) =\sum_{\pi\in\sn}q^{\ell(\pi)} = [n]!, $$     
where $[n]!=[1][2]\cdots[n]$ and $[i]=1+q+q^2+\cdots+q^{i-1}$.

A permutation $\pi=a_1a_2\cdots a_n\in\sn$ is \emph{3142-avoiding} and
\emph{2413-avoiding} if there do not exist $i<j<k<h$ with
$a_{j}<a_{h}<a_{i}<a_{k}$ or $a_{k}<a_{i}<a_{h}<a_{j}$. Such
permutations are also called \emph{separable}. For a general
introduction to pattern avoidance, see \cite{bona}. Separable
permutations first arose in the work of Avis and Newborn
\cite{av-ne} and have subsequently received a lot of attention. A
survey of some of their properties appears in \cite{albert}. In
particular, the number of separable permutations in $\sn$ is the
(large) Schr\"oder number $r_{n -1}$. Let id denote the identity element
of $\sn$ (the unique minimal element in weak order), and let
$w_0=n,n-1,\dots,1$, the unique maximal element. For $\pi\in\sn$,
let $\Lambda_\pi$ denote the interval $[\mathrm{id},\pi]$ (in weak
order), and let $V_\pi=[\pi,w_0]$. Thus $\Lambda_\pi$ and $V_\pi$ are
themselves graded posets (with rank$(\pi)=0$ in $V_\pi$). The main
result of this paper is the surprising formula
\beq F(\Lambda_\pi,q)F(V_\pi,q) = F(\sn,q) =[n]!. \label{eqmain} \eeq

Equation~\eqref{eqmain} was conjectured by R. Stanley. It was inspired
by an observation of Steven Sam that if $\pi$ is 231-avoiding, then
$\Lambda_\pi$ appears to be rank-symmetric and rank-unimodal. These
two properties are simple consequences of Theorem \ref{formula}. (See
Corollary \ref{sym and unim}.) Figure~\ref{fig1} shows the Hasse
diagram of $\mathfrak{S}_4$. If for instance $\pi=4132$ (which is
separable), then
$F(\Lambda_\pi,q)=1+2q+2q^2+2q^3+q^4$ and $F(V_\pi,q)=1+q+q^2$. Then
multiplying $F(\Lambda_\pi,q)$ and $F(V_\pi,q)$ gives us $[4]!$.

We also give a convenient method to find an explicit formula for
$F(\Lambda_\pi,q)$ and $F(V_\pi,q)$. In fact, when $\pi=a_1a_2\cdots
a_n\in\sn$ is 231-avoiding, meaning that there do not exist $i<j<k$
with $a_{k}<a_{i}<a_{j}$, the explicit formula for
$F(\Lambda_\pi,q)$ is given by
\beq F(\Lambda_\pi,q) = \prod_{i=1}^n [c_i], \eeq 
where $a_{c_i+i}$ is
the first element to the right of $a_i$ in $\pi$ satisfying $a_{c_i+i}
> a_i$, setting $a_{n+1}=\infty$.

$$\xymatrix@R=18pt@C=-5pt@M=0.5pt@H=1pt@W=1pt{
& & & & & 4321 \ar@{-}[dll] \ar@{-}[d] \ar@{-}[drr] & & & & &\\
& & & 4312 \ar@{-}[dll] \ar@{-}[drr] & & 4231 \ar@{-}[dll] \ar@{-}[drr] & & 3421 \ar@{-}[dll] \ar@{-}[drr] & & & \\
& 4132 \ar@{-}[dl] \ar@{-}[dr] & & 4213 \ar@{-}[dl] \ar@{-}[dr] & & 3412 \ar@{-}[dr] & & 2431 \ar@{-}[dlll] \ar@{-}[dr] & & 3241\ar@{-}[dl] \ar@{-}[dr] & \\
1432 \ar@{-}[dr] \ar@{-}[drrr] & & 4123 \ar@{-}[dl] & & 2413 \ar@{-}[dr] & & 3142 \ar@{-}[dlll] \ar@{-}[dr] & & 3214 \ar@{-}[dl] \ar@{-}[dr] & & 2341 \ar@{-}[dl] \\
& 1423 \ar@{-}[drr]  & & 1342 \ar@{-}[drr] & & 2143 \ar@{-}[dll] \ar@{-}[drr] & & 3124 \ar@{-}[dll] & & 2314\ar@{-}[dll] & \\
& & & 1243 \ar@{-}[drr]  & & 1324 \ar@{-}[d]  & & 2134 \ar@{-}[dll]  & & & \\
& & & & & 1234 & & & & &\\ }\label{fig1}$$ 
\begin{center}{Figure 1. The graded poset $\mathfrak{S}_4$ under
    weak order} \end{center} 
\vspace{0.5cm}

The \emph{inversion poset} $P_\pi$ of $\pi = a_1a_2\cdots a_n\in \sn$
has the relations $a_i<a_j$ in $P$ if $i<j$ and $a_i<a_j$ in
$\mathbb{Z}$. Figure 2 is the diagram of the inversion posets of
the permutations $34125$ and $31425$.

$$\xymatrix@R=18pt@C=8pt@M=3pt@H=1pt@W=1pt{
& & & 5 \ar@{-}[dl]\ar@{-}[dll] & & & & &  5 \ar@{-}[dl]\ar@{-}[dll]\\
& 4 \ar@{-}[dl] & 2 \ar@{-}[dl] & & & & 4 \ar@{-}[d] \ar@{-}[dl] & 2 \ar@{-}[dl] &  \\
3 & 1 & & & & 3 & 1 & & }$$
\label{fig2}
\begin{center}
{Figure 2.  The inversion posets of 34125 (left) and 31425 (right)}
\end{center}

\vspace{.3cm}
Let $P$ and $Q$ be posets on disjoint sets. The \emph{disjoint union}
$P+Q$ is the poset on the union $P\cup Q$ such that $s\leq t$ in $P+Q$
if either $s,t\in P$ and $s\leq t$ in $P$, or $s,t\in Q$ and $s\leq t$
in $Q$.  The \emph{ordinal sum} $P\oplus Q$ is the poset on the union
$P\cup Q$ such that $s\leq t$ in $P\oplus Q$ if either $s,t\in P$ and
$s\leq t$ in $P$, or $s,t\in Q$ and $s\leq t$ in $Q$, or $s\in P$ and
$t\in Q$.

The following lemma is easy to prove, so we omit the proof here.

\begin{lemma} \label{sum}
Let $\pi \in \sn$ with $\pi = \pi_A \pi_B$, where $\pi_A$ is a
permutation of size $m$ and $\pi_B$ is a permutation of size $n-m$ for
some $m < n$. Then  
\begin{itemize}
\item $P_\pi = P_{\pi_A} + P_{\pi_B}$ if and only if $\pi_B$ is a
  permutation of the letters $\{1,2,\dots, m\}$ and $\pi_A$ is a
  permutation of the letters $\{m+1, m+2, \dots, n\}$. 
\item $P_\pi = P_{\pi_A} \oplus P_{\pi_B}$ if and only if $\pi_A$ is a
  permutation of the letters $\{1,2,\dots, m\}$ and $\pi_B$ is a
  permutation of the letters $\{m+1, m+2, \dots, n\}$. 
\end{itemize}
\end{lemma}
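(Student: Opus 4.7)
The plan is to unpack the definition of the inversion poset $P_\pi$ and compare it directly with the definitions of disjoint union and ordinal sum. Since $\pi_A$ occupies positions $1,\dots,m$ and $\pi_B$ occupies positions $m+1,\dots,n$ in $\pi$, the induced subposet of $P_\pi$ on the letters appearing in $\pi_A$ is exactly $P_{\pi_A}$, and similarly for $\pi_B$. So the whole question reduces to describing the cross-relations in $P_\pi$, i.e.\ the relations of the form $a \le b$ where $a$ is a letter of $\pi_A$ and $b$ is a letter of $\pi_B$.

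The key observation is that for such a pair $(a,b)$, the position of $a$ in $\pi$ is automatically less than the position of $b$, so the position half of the defining condition of $P_\pi$ is free. Hence $a <_{P_\pi} b$ holds if and only if $a < b$ as integers, and $b <_{P_\pi} a$ never holds. With this in hand, both bullets become pure statements about which integers lie in $\pi_A$ versus $\pi_B$.

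For the first bullet, $P_\pi = P_{\pi_A}+P_{\pi_B}$ requires that no cross relation hold, which by the observation above is equivalent to demanding every letter of $\pi_A$ exceed every letter of $\pi_B$ as integers. Since the letters together form $\{1,2,\dots,n\}$ and $|\pi_A|=m$, this forces $\pi_A$ to use exactly $\{m+1,\dots,n\}$ and $\pi_B$ to use $\{1,\dots,m\}$, and conversely this choice clearly produces no cross relations. For the second bullet, $P_\pi = P_{\pi_A}\oplus P_{\pi_B}$ requires that every letter of $\pi_A$ sit below every letter of $\pi_B$ in $P_\pi$, which by the same observation is equivalent to every letter of $\pi_A$ being less than every letter of $\pi_B$ as integers. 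Again by a counting argument this happens exactly when $\pi_A$ uses $\{1,\dots,m\}$ and $\pi_B$ uses $\{m+1,\dots,n\}$, and conversely.

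Since the argument is a direct translation of definitions with no combinatorial obstacle, there is no hard step; the only thing worth highlighting is the reduction of cross-comparisons to pure integer comparisons, which is what makes the position hypothesis $\pi=\pi_A\pi_B$ do all the work.
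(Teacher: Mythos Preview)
The paper omits the proof of this lemma entirely, remarking only that it is easy; your direct argument---reducing everything to the cross-relations between a letter of $\pi_A$ and a letter of $\pi_B$, and observing that for such pairs the position condition in the definition of $P_\pi$ is automatic so only the integer comparison survives---is exactly the intended verification and is correct. (One small caveat: if $|\pi_A|=m$, then ``every letter of $\pi_A$ exceeds every letter of $\pi_B$'' forces $\pi_A$ to use $\{n-m+1,\dots,n\}$ rather than $\{m+1,\dots,n\}$; this indexing slip is already present in the lemma as stated, and your write-up simply inherits it---the logic is unaffected.)
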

\vspace{0.2cm}
A \emph{linear extension} of a poset $P$ on the set $\{1,2,\dots,n\}$
is a permutation $\pi=a_1\cdots a_n\in\sn$ such that if $i<j$ in $P$,
then $i$ precedes $j$ in $\pi$. 
We use $\mathcal{L}(P)$ to denote the set of linear extensions of
$P$. Since a linear extension $\pi$ of a poset $P$ on $\{1,\dots,n\}$
has been defined as a permutation of $\{1,\dots,n\}$, it has length
$\ell(\pi)$ as defined above. We define
  $$ F(\mathcal{L}(P), q) = \sum_{\pi \in
    \mathcal{L}(P)}q^{\ell(\pi)}. $$ 

We have the following rules for the operation on $F(\mathcal{L}(P), q).$
\begin{lemma}\label{op}
Let $P$ and $Q$ be two posets, where $P$ is on $\{1,2,\dots,
m\}$ and $Q$ is on $\{m+1, \dots, m+n\}$. Then  
\beq F(\mathcal{L}(P \oplus Q), q) = F(\mathcal{L}(P,
q))F(\mathcal{L}(Q, q) ) \label{opsum},\eeq 
\beq F(\mathcal{L}(P + Q), q) = F(\mathcal{L}(P, q))F(\mathcal{L}(Q,
q)) \begin{bmatrix}m+n\\ m \end{bmatrix}\label{opunion}, \eeq   
where $\begin{bmatrix}m+n\\ m \end{bmatrix} = \dfrac{[m+n]!}{[m]![n]!}.$
\end{lemma}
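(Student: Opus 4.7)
The plan is to prove both identities by the same basic device: given a linear extension of the combined poset, restrict it to $P$ and to $Q$, and carefully account for the inversions of the resulting concatenation/interleaving.

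For the ordinal sum \eqref{opsum}: in $P \oplus Q$ every element of $P$ lies below every element of $Q$, so any $\pi \in \mathcal{L}(P \oplus Q)$ must list all letters of $P$ before all letters of $Q$. Hence $\pi = \sigma\tau$ is the concatenation of a unique $\sigma \in \mathcal{L}(P)$ and a unique $\tau \in \mathcal{L}(Q)$. Because the letters of $\sigma$ lie in $\{1,\dots,m\}$ and those of $\tau$ in $\{m+1,\dots,m+n\}$, no inversion of $\sigma\tau$ can straddle the boundary, so $\ell(\pi) = \ell(\sigma) + \ell(\tau)$. Summing $q^{\ell(\pi)}$ factors as the product $F(\mathcal{L}(P),q)F(\mathcal{L}(Q),q)$.

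For the disjoint union \eqref{opunion}: a linear extension $\pi \in \mathcal{L}(P+Q)$ is precisely an interleaving of some $\sigma \in \mathcal{L}(P)$ and some $\tau \in \mathcal{L}(Q)$, with no constraint beyond preserving the internal order of each. The inversions of $\pi$ split into three classes: those among the letters of $\sigma$ (contributing $q^{\ell(\sigma)}$), those among the letters of $\tau$ (contributing $q^{\ell(\tau)}$), and \emph{crossing} inversions in which a letter of $\tau$ (from $\{m+1,\dots,m+n\}$) appears before a letter of $\sigma$ (from $\{1,\dots,m\}$). I would then invoke the classical $q$-binomial identity: for any fixed $\sigma$ and $\tau$, summing $q^{\#(\text{crossings})}$ over the $\binom{m+n}{m}$ possible interleavings yields exactly $\begin{bmatrix} m+n \\ m \end{bmatrix}$. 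Since this factor is independent of $\sigma$ and $\tau$, summing over both gives \eqref{opunion}.

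The only nontrivial ingredient is that $q$-binomial identity for interleavings, which is the expected main (small) obstacle. I would either cite it or derive it in a line by encoding each interleaving as a binary word of length $m+n$ with $m$ zeros (positions of the $\sigma$-letters) and $n$ ones (positions of the $\tau$-letters): a crossing inversion corresponds to a pair in which a $1$ precedes a $0$, and conditioning on the last symbol gives the standard recursion for $\begin{bmatrix} m+n \\ m \end{bmatrix}$.
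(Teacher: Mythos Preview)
Your proof is correct. For the ordinal sum \eqref{opsum} you do exactly what the paper does: observe that every linear extension is a concatenation and count inversions.

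For the disjoint union \eqref{opunion} your route differs from the paper's. The paper does not give a direct combinatorial argument; it simply remarks that the identity follows from the theory of $P$-partitions, citing a straightforward extension of Proposition~1.3.17 in Stanley's \emph{Enumerative Combinatorics}, vol.~I. Your argument instead decomposes the inversions of an interleaving into internal and crossing inversions and then invokes (or re-derives) the classical fact that $\sum q^{\#\text{crossings}}$ over all shuffles equals the $q$-binomial coefficient. This is a more elementary and self-contained proof than the $P$-partition appeal, requiring only the standard combinatorial interpretation of $\begin{bmatrix} m+n \\ m \end{bmatrix}$; the paper's approach, by contrast, situates the result within a broader generating-function framework that would also handle more general labelings and statistics. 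Either is perfectly adequate here.
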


The proof of (\ref{opsum}) is immediate by considering the definition
of ordinal sum and counting the number of inversions. The proof of
(\ref{opunion}) follows from the theory of $P$-partitions,
a straightforward extension of the second proof of Proposition 1.3.17
of \cite{EC1}. 

A \emph{reduced decomposition} of a permutation $\pi\in\sn$ is a
sequence $(i_1,i_2,\dots,i_\ell)$ such that $\pi =s_{i_1}s_{i_2}\cdots
s_{i_\ell}$ and $\ell$ is minimal, viz., $\ell=\ell(\pi)$. If
$\pi=\pi_0\covby \pi_1\covby\cdots\covby \pi_m= \sigma$ is a saturated
chain $C$ from $\pi$ to $\sigma$, where $\pi_j=\pi_{j-1}s_{i_j}$, then
$r(C):=(i_1,\dots,i_\ell)$ is a reduced decomposition of
$\pi^{-1}\sigma$. 
Write $R(\pi)$ for the set of reduced decompositions of $\pi$. Thus
the map $C\mapsto r(C)$ is a bijection between saturated chains from
$\mathrm{id}$ to $\pi$ and reduced decompositions of $\pi$. 

With the definitions above, we proceed to the proofs of the main
theorem and the explicit formula for $F(\Lambda_\pi,q)$. 


\section{Preliminary Results}
The following lemma states a property of separable permutations
which is of great importance to our proof of the main theorem. 
\begin{lemma}\label{separable}
If $n>1$ and $\pi=a_1a_2\cdots a_n\in \sn$ is a separable permutation,
then we can write $\pi=\pi_A\pi_B$ (concatenation of words), where
$\pi_A$ and $\pi_B$ are both separable permutations satisfying one of
the two following properties:    
\begin{itemize}
\item$\pi_A$ is a permutation of $1,2,\dots, m$ and $\pi_B$ is of
  $m+1,\dots, n$ for some $m$ with $1 \leq m < n$; 
\item$\pi_A$ is a permutation of $m+1,\dots, n$ and $\pi_B$ is of
  $1,2,\dots, m$ for some $m$ with $1 \leq m < n$. 
\end{itemize}
\end{lemma}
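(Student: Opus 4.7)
The first step is to recognize the two bullet points as exactly the cases $\pi=\pi_A\oplus\pi_B$ (direct sum: low values first) and $\pi=\pi_A\ominus\pi_B$ (skew sum: high values first). Whenever $\pi$ admits such a factorization, $\pi_A$ and $\pi_B$ automatically inherit separability, since any occurrence of $2413$ or $3142$ inside a factor would, after the obvious shift of values, be an occurrence in $\pi$ itself. So the entire content of the lemma is the existence of at least one of these two decompositions for every separable $\pi$ with $n>1$.

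I would proceed by contradiction. For each cut $k\in\{1,\dots,n-1\}$ set $S_k=\{a_1,\dots,a_k\}$. A direct-sum decomposition at $k$ exists iff $S_k=\{1,\dots,k\}$, and a skew-sum decomposition iff $S_k=\{n-k+1,\dots,n\}$. Assume neither equality holds at any $k$. Equivalently, at every $k$ there exist both an inversion across the cut (some $i\le k<j$ with $a_i>a_j$, which is what prevents $S_k$ from being $\{1,\dots,k\}$) and a non-inversion across the cut (some $i'\le k<j'$ with $a_{i'}<a_{j'}$, which is what prevents $S_k$ from being $\{n-k+1,\dots,n\}$).

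The goal is then to combine these crossings into a forbidden 4-element pattern. The natural starting point is $k=1$: the failure hypothesis there forces $1<a_1<n$, so there are positions $p,q>1$ with $a_p<a_1<a_q$. Together with $a_1$ these three elements already form a $213$ or $231$ relative pattern (depending on whether $p<q$ or $q<p$), and one additional element, produced by invoking the crossing hypothesis at a suitably chosen further cut, yields four indices $i<j<k<h$ whose values realize either $a_j<a_h<a_i<a_k$ or $a_k<a_i<a_h<a_j$. This contradicts the separability of $\pi$ and finishes the proof.

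The main obstacle is the last step: the pattern-building argument splits into several subcases depending on the relative order of $p$ and $q$ and on which crossing is supplied by the failure hypothesis, so some care is needed to make sure that one of the two forbidden patterns really does appear in each subcase. A cleaner alternative would be strong induction on $n$: apply the inductive decomposition to the separable permutation obtained from $\pi$ by deleting an extremal letter, and check that the decomposition extends, with the failure situations again forced to contain a $2413$ or $3142$. Either route reduces the lemma to a finite combinatorial case check.
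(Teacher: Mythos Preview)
The paper does not actually prove this lemma: it declares the result ``well-known and easy to prove'' and omits the argument entirely. So there is no proof in the paper to compare your proposal against.

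Your outline is a standard and correct route to the result. The reduction to showing that an indecomposable $\pi$ (no cut $k$ with $S_k=\{1,\dots,k\}$ or $S_k=\{n-k+1,\dots,n\}$) must contain $2413$ or $3142$ is exactly the right reformulation, and the observation that the factors automatically inherit separability is correct. The one place where your sketch is honestly labeled as incomplete---the pattern-building step---does require real work: starting from $1<a_1<n$ and the positions $p,q$ is not quite enough by itself, and the clean way to finish is usually to track the leftmost position where the prefix values stop being an interval of integers and argue from there, or (equivalently) to invoke the standard fact that every simple permutation of length $\ge 4$ contains both $2413$ and $3142$. Your alternative inductive route via deleting an extremal letter also works and is arguably tidier. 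Either way, what you have written is a correct plan whose remaining case analysis is routine, which is consistent with the paper's decision to omit the proof.
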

Lemma \ref{separable} is well-known and easy to prove; thus we omit
the proof here.  
The following lemma is an immediate consequence of Lemma \ref{separable}

\begin{corollary} \label{cor:ppi}
If $n>1$ and $\pi=a_1a_2\cdots a_n\in \sn$ is a separable permutation,
then there exist two disjoint nonempty posets $P_{\pi_A}, P_{\pi_B}$
such that $P_\pi = P_{\pi_A} + P_{\pi_B}$ or $P_\pi = P_{\pi_A} \oplus 
P_{\pi_B}$. 
\end{corollary}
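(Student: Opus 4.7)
The plan is to derive this immediately by combining Lemma \ref{separable} with Lemma \ref{sum}. Given a separable $\pi \in \sn$ with $n > 1$, Lemma \ref{separable} provides an index $m$ with $1 \le m < n$ and a factorization $\pi = \pi_A \pi_B$ (as words) such that either (i) $\pi_A$ is a permutation of $\{1,\dots,m\}$ and $\pi_B$ a permutation of $\{m+1,\dots,n\}$, or (ii) $\pi_A$ is a permutation of $\{m+1,\dots,n\}$ and $\pi_B$ a permutation of $\{1,\dots,m\}$. In both cases $\pi_A$ and $\pi_B$ are separable, so their inversion posets $P_{\pi_A}$ and $P_{\pi_B}$ are well-defined, and they are supported on disjoint subsets of $\{1,\dots,n\}$ (namely $\{1,\dots,m\}$ and $\{m+1,\dots,n\}$, in one order or the other). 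Since $1 \le m < n$, both subsets are nonempty.

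Next I would apply Lemma \ref{sum} to each case. In case (i), $\pi_A$ uses the smaller letters and $\pi_B$ the larger, so the second bullet of Lemma \ref{sum} gives $P_\pi = P_{\pi_A} \oplus P_{\pi_B}$. In case (ii), $\pi_B$ uses the smaller letters and $\pi_A$ the larger, so the first bullet of Lemma \ref{sum} gives $P_\pi = P_{\pi_A} + P_{\pi_B}$. Either way we obtain the claimed decomposition of $P_\pi$ into two nonempty posets, which completes the corollary.

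There is essentially no obstacle: the work has already been done in Lemmas \ref{separable} and \ref{sum}, and the corollary amounts to matching the two cases of the former with the two bullets of the latter. The only point worth writing down carefully is that the posets $P_{\pi_A}$ and $P_{\pi_B}$ are nonempty and supported on disjoint sets, so that the expressions $P_{\pi_A} + P_{\pi_B}$ and $P_{\pi_A} \oplus P_{\pi_B}$ are meaningful in the sense of the earlier definitions.
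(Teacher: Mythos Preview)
Your proposal is correct and matches the paper's approach: the paper states the corollary as ``an immediate consequence of Lemma~\ref{separable}'' without giving further details, and you have simply made explicit the role of Lemma~\ref{sum} in passing from the word decomposition $\pi = \pi_A\pi_B$ to the poset decomposition $P_\pi = P_{\pi_A} \oplus P_{\pi_B}$ or $P_\pi = P_{\pi_A} + P_{\pi_B}$.
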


The following lemma is a special case of a result of Bj\"orner and
Wachs \cite[Thm.~6.8]{bj-wa}.

\begin{lemma} \label{FF}
Let $\pi$ be any permutation in $\sn$, then $F(\mathcal{L}(P_\pi), q) = F(\Lambda_\pi, q)$.
\end{lemma}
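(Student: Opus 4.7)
The plan is to show that the two sets are literally equal as subsets of $\sn$, and the statistic on each is the usual length $\ell$, so the generating functions agree term by term. The key tool is the classical description of weak order by inversion sets: for $\sigma,\pi \in \sn$, writing $\mathrm{Inv}(\sigma) = \{(i,j) : i<j,\ \sigma(i)>\sigma(j)\}$, one has $\sigma \leq \pi$ in weak order if and only if $\mathrm{Inv}(\sigma) \subseteq \mathrm{Inv}(\pi)$. I would recall this by induction on $\ell(\pi)-\ell(\sigma)$: each cover relation $\tau \covby \tau s_i$ adds exactly one inversion (the pair in positions $i,i+1$), so the inversion sets of the elements of any saturated chain from $\mathrm{id}$ to $\pi$ form a strictly increasing chain of subsets of $\mathrm{Inv}(\pi)$; conversely, if $\mathrm{Inv}(\sigma) \subsetneq \mathrm{Inv}(\pi)$, one can always find an adjacent transposition that adds an inversion of $\pi$ not in $\sigma$, building a chain $\sigma \covby \sigma s_i \leq \pi$.

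Next I would unwind the definition of $\mathcal{L}(P_\pi)$. Write $\pi = a_1 \cdots a_n$. By definition, $a_i <_{P_\pi} a_j$ precisely when $i<j$ and $a_i < a_j$, i.e., precisely when $(a_i,a_j)$ with $a_i<a_j$ is a non-inversion of $\pi$. A permutation $\sigma = b_1 \cdots b_n$ is a linear extension of $P_\pi$ iff for every such non-inversion $(k,l)$ of $\pi$ (with $k<l$), the element $k$ precedes $l$ in $\sigma$ as well, i.e., $(k,l)$ is a non-inversion of $\sigma$. Taking complements inside the set $\{(k,l): k<l\}$, this is exactly the condition $\mathrm{Inv}(\sigma) \subseteq \mathrm{Inv}(\pi)$.

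Combining these two steps yields $\mathcal{L}(P_\pi) = \Lambda_\pi$ as subsets of $\sn$. Since $\ell(\sigma) = |\mathrm{Inv}(\sigma)|$ agrees with the rank of $\sigma$ in $\Lambda_\pi$ (as $\Lambda_\pi$ is a graded interval of the weak order with rank function $\ell$), the identity of sets upgrades to the desired identity of generating functions. The only nontrivial step is the inversion-set characterization of weak order; the rest is definition-chasing.
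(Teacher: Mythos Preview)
Your argument is correct in substance and actually goes further than the paper, which does not prove this lemma at all but simply cites it as a special case of a theorem of Bj\"orner and Wachs. You supply a self-contained proof by establishing the stronger statement $\mathcal{L}(P_\pi)=\Lambda_\pi$ as subsets of $\sn$, via the inversion-set characterization of weak order; this is exactly the content of the cited result specialized to inversion posets.

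One notational point needs fixing. With the paper's convention, $\pi s_i$ swaps the entries in positions $i$ and $i+1$ (right weak order), and the inversion sets that grow monotonically along saturated chains are the \emph{value} inversions $\{(k,l): k<l,\ \sigma^{-1}(k)>\sigma^{-1}(l)\}$, not the position pairs $\{(i,j): i<j,\ \sigma(i)>\sigma(j)\}$ you wrote down. For instance $132\covby 312$, yet the position-inversion sets $\{(2,3)\}$ and $\{(1,2),(1,3)\}$ are not nested. Your second paragraph already reasons with value pairs $(k,l)$ and the phrase ``$k$ precedes $l$ in $\sigma$,'' so the proof goes through verbatim once $\mathrm{Inv}(\sigma)$ is redefined as the set of value inversions; correspondingly, ``the pair in positions $i,i+1$'' should read ``the pair of values in positions $i,i+1$.''
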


\vskip 0.2cm 
Now we arrive at one of the main preliminary results of this section.

\begin{proposition} \label{below}  
If $\pi =a_1a_2\cdots a_n \in \mathfrak{S}_n$ is a separable
permutation, then the following hold: 
\paragraph{(i)} When $a_1 < a_n$, we can write $\pi = \pi_A\pi_B$
where $\pi_A$ is a permutation of size $m$ for some $m$ with $1 \leq m
< n$, and 
      \beq F(\Lambda_\pi, q) = F(\Lambda_{\pi_A}, q) \cdot F(\Lambda_{\pi_B}, q) \label{belowsb}. \eeq
\paragraph{(ii)} When $a_1 > a_n$, we can write $\pi = \pi_A
\pi_B$, where $\pi_A$ is a permutation of size $m$ for some $m$ with
$1 \leq m < n$, and \beq F(\Lambda_\pi, q) = \begin{bmatrix}n\\ m
\end{bmatrix} F(\Lambda_{\pi_A}, q) \cdot F(\Lambda_{\pi_B},
q). \label{belowbs} \eeq 
\end{proposition}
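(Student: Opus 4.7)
The strategy is to chain together the preceding lemmas. First I would invoke Lemma \ref{separable} to write $\pi = \pi_A \pi_B$ with both factors separable and with one factor's letters forming an interval of small values and the other's an interval of large values. The key observation, immediate from the two alternatives in Lemma \ref{separable}, is that the decomposition is governed by $a_1$ versus $a_n$: since $a_1$ is the leading letter of $\pi_A$ and $a_n$ is the trailing letter of $\pi_B$, we must have $a_1 < a_n$ exactly in the case where $\pi_A$ carries the small letters $\{1,\ldots,m\}$ and $\pi_B$ carries the large letters $\{m+1,\ldots,n\}$, and $a_1 > a_n$ exactly in the complementary case. This matches the bifurcation in the proposition's statement.

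In case (i), Lemma \ref{sum} (equivalently Corollary \ref{cor:ppi}) yields $P_\pi = P_{\pi_A} \oplus P_{\pi_B}$, so equation (\ref{opsum}) of Lemma \ref{op} gives
$$F(\mathcal{L}(P_\pi), q) = F(\mathcal{L}(P_{\pi_A}), q)\, F(\mathcal{L}(P_{\pi_B}), q).$$
In case (ii), Lemma \ref{sum} instead gives $P_\pi = P_{\pi_A} + P_{\pi_B}$, so by (\ref{opunion}) the analogous product is multiplied by the Gaussian binomial $\begin{bmatrix}n\\ m\end{bmatrix}$, where $m$ is the size of $\pi_A$. Applying Lemma \ref{FF} to $\pi$, $\pi_A$, and $\pi_B$ in turn converts each linear-extension generating function into the corresponding interval generating function in weak order, which yields (\ref{belowsb}) and (\ref{belowbs}) respectively.

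The only mild subtlety is that $\pi_A$ and $\pi_B$ are permutations of shifted intervals of letters rather than of $\{1,\ldots,k\}$ in the usual sense, so one must check that Lemma \ref{FF} still applies to them. This is harmless: both the inversion poset (as an abstract poset) and the interval in weak order depend only on the relative order of the entries, so a standard order-preserving relabeling to $\{1,\ldots,m\}$ (respectively $\{1,\ldots,n-m\}$) preserves everything in sight. No serious obstacle is anticipated; the proposition is essentially a bookkeeping exercise assembling the pieces built in the preliminary section, with the only genuine content being the matching of the $a_1 \lessgtr a_n$ dichotomy to the two cases of Lemma \ref{separable}.
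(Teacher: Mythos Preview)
Your proposal is correct and follows essentially the same route as the paper: split $\pi$ via Lemma~\ref{separable}, identify the inversion poset as an ordinal sum or disjoint union via Lemma~\ref{sum}, apply Lemma~\ref{op}, and then convert to weak-order intervals via Lemma~\ref{FF}. You are in fact slightly more careful than the paper in two places---explicitly matching the $a_1 \lessgtr a_n$ dichotomy to the two alternatives of Lemma~\ref{separable}, and noting the harmless relabeling needed to apply Lemma~\ref{FF} to $\pi_A$ and $\pi_B$---but neither addition changes the argument.
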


\begin{proof}
Let $P$ be the inversion poset of $\pi$, $P_{A}$ be the inversion
poset of $\pi_A$, and $P_{B}$ be the inversion poset of $\pi_B$. 

When $a_1 < a_n$, it follows from Lemma \ref{separable} that we can
write $\pi = \pi_A \pi_B$, where $\pi_A$ is a permutation of $\{1,2,
\dots, m\}$ and $\pi_B$ is a permutation of $\{m+1, m+2, \dots,
n\}$. By Lemma \ref{sum}, we have $P = P_{A} \oplus P_{B}$. It
follows from Lemma \ref{op} that $$F(\mathcal{L}(P_{A} \oplus
P_{B}), q) = F(\mathcal{L}(P_{A}), q)F(\mathcal{L}(P_{B}),
q). $$ 

Since $\pi, \pi_A,\pi_B$ are all separable permutations, by Lemma
\ref{FF}, we have \begin{eqnarray*} 
 F(\Lambda_\pi, q) = F(\mathcal{L}(P, q)) &=& F(\mathcal{L}(P_{A}
 \oplus P_{B}), q) \\ &= &F(\mathcal{L}(P_{A}),
 q)F(\mathcal{L}(P_{B}), q)\\ &= &F(\Lambda_{\pi_A}, q)
 F(\Lambda_{\pi_B}, q). \end{eqnarray*}

The proof of (ii) is similar.
\end{proof}

\begin{proposition}\label{above}   
If $\pi =a_1a_2\cdots a_n \in \mathfrak{S}_n$ is a separable
permutation, then the following hold: 
\paragraph{(i)} If $a_1 < a_n $, then we can write $\pi = \pi_A\pi_B$
where $\pi_A$ is a permutation of size $m$ for some $m$ with $1 \leq m
< n$, and  
 \beq F(V_{\pi}, q) =\begin{bmatrix}n\\ m \end{bmatrix} F(V_{\pi_A},
 q) \cdot F(V_{\pi_B}, q) \label{abovesb}.\eeq 
\paragraph{(ii)} If $a_1 > a_n$, then we can write $\pi = \pi_A
\pi_B$, where $\pi_A$ is a permutation of size $m$ for some $m$ with
$1 \leq m < n$, and \beq F(V_\pi, q) = F(V_{\pi_A}, q) \cdot
F(V_{\pi_B}, q) \label{abovebs}.\eeq 
\end{proposition}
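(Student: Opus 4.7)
The plan is to mirror the proof of Proposition~\ref{below} but to work directly with inversion sets, since no analog of Lemma~\ref{FF} for $V_\pi$ is yet available. The first step is to apply Lemma~\ref{separable} to decompose $\pi = \pi_A \pi_B$: in case~(i), $\pi_A$ is a permutation of $\{1,\dots,m\}$ and $\pi_B$ of $\{m+1,\dots,n\}$; in case~(ii), $\pi_A$ is a permutation of $\{n-m+1,\dots,n\}$ and $\pi_B$ of $\{1,\dots,n-m\}$. I would then use the standard characterization that $\sigma \geq \pi$ in the weak order is equivalent to every inversion of $\pi$ being an inversion of $\sigma$.

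Under this characterization the block decomposition of $\pi$ determines the structure of $V_\pi$. In case~(i), the inversions of $\pi$ split as $\mathrm{Inv}(\pi_A)\sqcup \mathrm{Inv}(\pi_B)$, with no inversions between the two value blocks. Letting $\sigma_A$ (respectively $\sigma_B$) denote the sub-permutation of $\sigma$ on the small (large) values, the condition $\sigma \in V_\pi$ will decouple into $\sigma_A \in V_{\pi_A}$, $\sigma_B \in V_{\pi_B}$, and an arbitrary interleaving of the two sub-words. In case~(ii), $\pi$ additionally contains all $m(n-m)$ mixed inversions, forcing every large value to precede every small value in $\sigma$; hence $\sigma = \sigma_A \sigma_B$ rigidly with $\sigma_A \in V_{\pi_A}$ and $\sigma_B \in V_{\pi_B}$.

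To finish, I would compute $F(V_\pi,q) = \sum_{\sigma\in V_\pi} q^{\ell(\sigma)-\ell(\pi)}$ by splitting $\ell(\sigma)$ into within-block inversions and cross inversions, and similarly for $\ell(\pi)$. In case~(i), $\ell(\pi)$ has no cross contribution, and summing $q^{\text{cross inversions}}$ over shuffles of $m$ small and $n-m$ large values yields $\begin{bmatrix}n\\ m\end{bmatrix}$ by the standard $q$-shuffle identity, giving $F(V_\pi,q) = \begin{bmatrix}n\\ m\end{bmatrix} F(V_{\pi_A},q) F(V_{\pi_B},q)$. In case~(ii), the cross contribution is the constant $m(n-m)$ in both $\ell(\sigma)$ and $\ell(\pi)$, so it cancels to give $F(V_\pi,q) = F(V_{\pi_A},q) F(V_{\pi_B},q)$. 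The main obstacle is the structural step in case~(i): verifying that every interleaving of admissible $\sigma_A$ and $\sigma_B$ indeed lies in $V_\pi$ and that no others occur. This should be immediate once one notes that $\pi$ itself has no cross inversions in case~(i), so $\mathrm{Inv}(\pi)$ imposes no constraint whatsoever on the relative order of small and large values.
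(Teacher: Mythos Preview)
Your proposal is correct but follows a genuinely different route from the paper. The paper proves Proposition~\ref{above} by reducing to the already-proven Proposition~\ref{below} via the complement: Lemma~\ref{up bij} gives a bijection $[\pi,w_0]\to[\mathrm{id},\pi^c]$, from which one obtains $q^{\ell(\pi^{-1}w_0)}F(V_\pi,q^{-1})=F(\Lambda_{\pi^c},q)$; applying Proposition~\ref{below} to $\pi^c$ (noting that complementation swaps cases (i) and (ii)) and then substituting $q\mapsto q^{-1}$ yields both formulas after a short computation with $\ell(\pi)=\ell(\pi_A)+\ell(\pi_B)$ or $\ell(\pi)=m(n-m)+\ell(\pi_A)+\ell(\pi_B)$. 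Your approach instead analyzes $V_\pi$ directly through the inversion-set characterization of weak order, decomposing each $\sigma\geq\pi$ into its sub-permutations on the two value blocks and a shuffle. The paper's argument is more economical in that it recycles Proposition~\ref{below} and Lemma~\ref{FF} rather than doing a fresh structural analysis; yours is more self-contained and makes the product structure of $V_\pi$ combinatorially transparent without passing through $\Lambda_{\pi^c}$. One point to make explicit in your write-up: the ``inversions'' in the characterization $\sigma\geq\pi\iff\mathrm{Inv}(\pi)\subseteq\mathrm{Inv}(\sigma)$ must be \emph{value}-inversions (pairs of values appearing in the wrong relative order), since the order here is the right weak order; your block-by-value decomposition already uses this implicitly, but the paper's definition of $\ell(\pi)$ is phrased in terms of position pairs, so the distinction is worth stating.
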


The proof is similar to that of Proposition \ref{below} by using the
\emph{complement} $\pi^c$ of a permutation $\pi = a_1a_2\cdots a_n \in
\sn$ defined by $\pi^c = a_1'a_2'\cdots a_n'$ where $a_i' = n+1-a_i$
for all $1 \leq i \leq n$. 

A standard property of $\pi^c$ and weak order is stated in the following lemma, and we omit the easy proof here. 
\begin{lemma} \label{up bij}
The rank relation between a permutation and its complement is given by
$$\ell(\pi^c) = \binom n2 - \ell(\pi).$$ 
In fact, there exists a bijection $\mu: [\pi, w_0] \to [\mathrm{id},
\pi^c]$ defined by $\mu(w) = w^c$ for all $w \in [\pi, w_0]$. 
\end{lemma}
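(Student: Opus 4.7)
The plan is to handle the two assertions in turn. For the length identity, I would use a pair-by-pair argument: for each $i<j$, the inequality $a_i > a_j$ holds if and only if $a_i' = n+1-a_i < n+1-a_j = a_j'$. Hence exactly one of $\pi$ and $\pi^c$ has $(i,j)$ as an inversion, and summing over the $\binom{n}{2}$ pairs yields $\ell(\pi) + \ell(\pi^c) = \binom{n}{2}$.

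For the bijection claim, the key observation is that complementation is an involution on $\sn$, so the map $\mu$ automatically has a two-sided set-theoretic inverse, namely the analogous complement map on $[\mathrm{id}, \pi^c]$. Thus the real content is that $\mu$ sends $[\pi, w_0]$ into $[\mathrm{id}, \pi^c]$ (and symmetrically), and for this it suffices to show that complementation is an order-reversing automorphism of the weak order on $\sn$. I would establish this on cover relations: if $u \covby u s_i$, then $a_i < a_{i+1}$ in the one-line notation of $u$. Writing out one-line forms shows that swapping positions $i,i+1$ commutes with complementing the entries, so $(u s_i)^c = u^c s_i$. Since $a_i' > a_{i+1}'$, the length identity from the first part gives $\ell(u^c s_i) = \ell(u^c) - 1$, so $(u s_i)^c \covby u^c$. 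Iterating along saturated chains from $\pi$ to $w$ and from $w$ to $w_0$ then yields $w^c \leq \pi^c$ and $w_0^c = \mathrm{id} \leq w^c$, showing $\mu(w) \in [\mathrm{id}, \pi^c]$ for every $w \in [\pi, w_0]$.

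There is no real obstacle here; the only subtlety worth double-checking is the identity $(u s_i)^c = u^c s_i$, which is immediate from one-line notation but easy to second-guess when toggling between function composition and right-to-left multiplication conventions. Everything else reduces to the first part of the lemma applied along a saturated chain.
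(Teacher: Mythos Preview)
Your argument is correct: the pairwise inversion count gives the length identity, and checking that complementation reverses cover relations via $(u s_i)^c = u^c s_i$ is exactly the right way to see that $\mu$ is an order-reversing involution of $\sn$, hence a bijection between $[\pi,w_0]$ and $[\mathrm{id},\pi^c]$. The paper itself omits the proof entirely, calling the lemma a standard and easy fact about $\pi^c$ and weak order, so your proposal simply fills in the routine verification that the paper leaves implicit.
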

%

\vspace{0.2cm}

\begin{proof}[Proof of Proposition \ref{above}]
For any $\omega \in [\pi, w_0]$, by Lemma \ref{up bij} and the fact
that 
 $$\ell(w^c) = \binom n2 - \ell(w) = \ell(\pi^{-1}w_0) -
    \ell(\pi^{-1}w),$$ 
any $q^{\ell(\pi^{-1}\omega)}$ in $F(V_\pi, q)$
corresponds uniquely to a term
$q^{\ell(\pi^{-1}w_0)-{\ell(\pi^{-1}\omega)}}$ in $F(\Lambda_{\pi^c},
q)$. Thus  
\beq q^{\ell(\pi^{-1}w_0)} F(V_\pi, q^{-1}) = F(\Lambda_{\pi^c}, q).
\label{UD} \eeq 

We now consider $\pi^c$ in the two cases in Proposition \ref{above}.

\vspace{0.15cm}
\textbf{(i)} When $a_1 < a_n $, by equation (\ref{belowbs}) we have  
\beq F(\Lambda_{\pi^c}, q) = \begin{bmatrix}n\\ m \end{bmatrix}
F(\Lambda_{{\pi_A}^c}, q) \cdot F(\Lambda_{{\pi_B}^c},
q)\label{1}.\eeq  

Combining (\ref{UD}) and  (\ref{1}) gives us  
\beq q^{\ell(\pi^{-1}w_0)} F(V_\pi, q^{-1}) = \begin{bmatrix}n\\ m
\end{bmatrix}F(V_{{\pi_A}^c}, q^{-1}) F(V_{{\pi_B}^c}, 
 q^{-1}) \cdot q^{\binom m2 - \ell(\pi_A)}\cdot q^{\binom{n-m}{2}
   - \ell(\pi_B)}\label{2}. \eeq   
Since the letters in $\pi_A$ are all smaller than the letters in
$\pi_B$, we have 
$\ell(\pi) = \ell(\pi_A) + \ell(\pi_B)$. Substituting $q^{-1}$ for
$q$ in (\ref{2}), which converts $\begin{bmatrix}n\\ m \end{bmatrix}$
into $q^{\binom n2-\binom m2 -\binom{n-m}{2}}\begin{bmatrix}n\\ m
\end{bmatrix}$, completes the proof of 
(\ref{abovesb}). 

\vspace{0.17cm}
\textbf{(ii)} Since all the letters in $\pi_A$ are greater than the
letters in $\pi_B$, we have $$\ell(\pi) = (n-m)m + \ell(\pi_A) +
\ell(\pi_B).$$ The rest of (\ref{abovebs}) can be proved analogously.   
\end{proof} 


\section{Main Results}
\subsection{Main Theorem}
\begin{theorem}  \label{main theorem}
Let $\pi\in \mathfrak{S}_n$, $\Lambda_\pi = [\mathrm{id}, \pi]$, and
$V_\pi = [\pi, w_0]$. The following equation holds for any separable
permutation $\pi$: 
\beq F(\Lambda_\pi, q)F(V_\pi. q) = F(\mathfrak{S}_n, q) = [n]!.\eeq
\end{theorem}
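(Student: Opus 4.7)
My plan is to prove the identity by induction on $n$, with the base case $n=1$ being trivial since $\Lambda_{\mathrm{id}} = V_{\mathrm{id}} = \{\mathrm{id}\}$ and $[1]! = 1$. All the substantive work has been done in Propositions \ref{below} and \ref{above}, so the induction step is essentially just a matter of combining these two recurrences.

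For the inductive step, assume $n > 1$ and $\pi = a_1 a_2 \cdots a_n$ is separable. By Lemma \ref{separable}, we may write $\pi = \pi_A \pi_B$ where $\pi_A, \pi_B$ are separable, $\pi_A$ has some size $m$ with $1 \leq m < n$, and either $\pi_A$ uses the letters $\{1,\dots,m\}$ (which forces $a_1 < a_n$) or $\pi_A$ uses $\{m+1,\dots,n\}$ (which forces $a_1 > a_n$). I would then split into these two cases.

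In the case $a_1 < a_n$, Proposition \ref{below}(i) gives $F(\Lambda_\pi,q) = F(\Lambda_{\pi_A},q) F(\Lambda_{\pi_B},q)$ while Proposition \ref{above}(i) gives $F(V_\pi,q) = \begin{bmatrix}n\\ m\end{bmatrix} F(V_{\pi_A},q) F(V_{\pi_B},q)$. Multiplying and regrouping,
\[
F(\Lambda_\pi,q)F(V_\pi,q) = \begin{bmatrix}n\\ m\end{bmatrix} \bigl(F(\Lambda_{\pi_A},q)F(V_{\pi_A},q)\bigr)\bigl(F(\Lambda_{\pi_B},q)F(V_{\pi_B},q)\bigr),
\]
and the inductive hypothesis collapses each parenthesized factor to $[m]!$ and $[n-m]!$ respectively, producing $\begin{bmatrix}n\\ m\end{bmatrix}[m]![n-m]! = [n]!$. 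In the case $a_1 > a_n$, the $q$-binomial coefficient $\begin{bmatrix}n\\ m\end{bmatrix}$ instead appears on the $F(\Lambda_\pi,q)$ side via Proposition \ref{below}(ii), while Proposition \ref{above}(ii) gives a clean product for $F(V_\pi,q)$; the same arithmetic then yields $[n]!$.

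There is really no serious obstacle here: the theorem reduces to observing that between the pair of propositions, exactly one of the two factorizations carries the $q$-binomial coefficient, regardless of whether $a_1 < a_n$ or $a_1 > a_n$, so the product always picks up the single factor $\begin{bmatrix}n\\ m\end{bmatrix}$ needed to pass from $[m]![n-m]!$ to $[n]!$. The only mild thing to note is that $\pi_A$ and $\pi_B$ are themselves separable (guaranteed by Lemma \ref{separable}), so the inductive hypothesis applies to them.
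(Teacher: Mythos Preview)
Your proposal is correct and follows essentially the same approach as the paper: induction on $n$, splitting into the two cases $a_1 < a_n$ and $a_1 > a_n$, and combining the recurrences from Propositions~\ref{below} and~\ref{above} so that exactly one $q$-binomial coefficient appears, which together with the inductive hypothesis yields $[n]!$. The only cosmetic differences are that the paper takes $n=2$ as the base case and writes out the $a_1>a_n$ case explicitly, whereas you take $n=1$ and detail the $a_1<a_n$ case.
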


\begin{proof}
When $n=2$, it is easy to verify that the expression holds. Suppose
the statement holds when $k < n$ for some $n \geq 3$; we want to show
that when $k = n$, the statement still holds.

Let $\pi_A$ and $\pi_B$ be the same as before.
When $a_1> a_n$  we have by (\ref{belowbs}) and (\ref{abovebs}) that
$$
F(\Lambda_\pi, q)F(V_\pi. q) = \begin{bmatrix}n\\ m 
\end{bmatrix}F(\Lambda_{\pi_A}, q)F(\Lambda_{\pi_A}, q)\cdot
F(V_{\pi_A}, q)F(V_{\pi_B}, q).$$ Thus by the inductive hypothesis, we
have $$ F(\Lambda_\pi, q)F(V_\pi, q) = \begin{bmatrix}n\\ m
\end{bmatrix}[m]! [n-m]! = [n]!.$$ 

The proof for $a_1< a_n$ is similar.  
\end{proof}

\subsection{A Bijection $\varphi\colon \Lambda_w\times V_w\to \sn$}
We can also give a  bijective proof of Theorem \ref{main
  theorem}. 
\begin{theorem} \label{bijj}
Let $\pi = a_1a_2 \cdots a_n \in \mathfrak{S}_n$ be a separable
permutation. The map  
$$ \phi: \Lambda_\pi \times V_\pi \rightarrow
\sn $$ defined by $\phi (u,v)= u^{-1}v$, where $u\leq \pi$ and $v\geq
\pi$, is a bijection. 
\end{theorem}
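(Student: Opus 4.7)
The plan is to exhibit $\phi$ as a bijection; injectivity (or surjectivity) alone will suffice since setting $q = 1$ in Theorem \ref{main theorem} gives $|\Lambda_\pi| \cdot |V_\pi| = n! = |\mathfrak{S}_n|$. I proceed by induction on $n$, with the trivial base $n = 1$, and invoke Lemma \ref{separable} to write $\pi = \pi_A \pi_B$ with $\pi_A, \pi_B$ separable on complementary value sets.

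First treat the case $a_1 < a_n$, in which $\pi_A$ is a permutation of $\{1, \ldots, m\}$ and $\pi_B$ of $\{m+1, \ldots, n\}$, so that $\pi$ has no cross-block inversions. A direct analysis of inversion sets shows that $u \leq \pi$ iff $u = u_A u_B$ (concatenation of words) with $u_A \in \Lambda_{\pi_A}$ and $u_B \in \Lambda_{\pi_B}$, while $v \geq \pi$ iff $v$ is a shuffle of some $v_A \in V_{\pi_A}$ with some $v_B \in V_{\pi_B}$, the shuffle being specified by a subset $S \subseteq [n]$ of size $m$ giving the positions of the small letters. Since such a $u$ fixes the value-set $\{1, \ldots, m\}$, the small-letter positions of $w = u^{-1}v$ coincide with those of $v$, so for any prescribed $w \in \mathfrak{S}_n$ the shuffle pattern $S = w^{-1}(\{1, \ldots, m\})$ is forced. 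Extracting the subwords $w_A, w_B$ of $w$ at positions $S$ and $[n] \setminus S$ respectively yields equations $w_A = u_A^{-1}v_A$ and $w_B = u_B^{-1}v_B$, each of which admits a unique solution by the inductive hypothesis applied to $\pi_A$ and $\pi_B$. Thus $(u, v)$ is determined uniquely by $w$.

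The case $a_1 > a_n$ is completely symmetric, with $\Lambda_\pi$ and $V_\pi$ playing swapped roles: now $V_\pi$ is the set of concatenations $v_A v_B$ and $\Lambda_\pi$ consists of shuffles of $u_A \in \Lambda_{\pi_A}$ with $u_B \in \Lambda_{\pi_B}$. One could alternatively reduce this case to the first via the complement $\pi \mapsto \pi^c$ of Lemma \ref{up bij}. The main technical obstacle is establishing these structural decompositions of $\Lambda_\pi$ and $V_\pi$ — that below-elements form concatenations while above-elements form shuffles (and vice versa in the other case) — refining the generating-function identities of Propositions \ref{below} and \ref{above}; once those are in place, extracting $S$ and the subwords $w_A, w_B$ and invoking induction is routine.
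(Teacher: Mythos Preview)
Your proposal is correct and follows the same inductive strategy as the paper: split $\pi=\pi_A\pi_B$ via Lemma~\ref{separable}, decompose $u$ and $v$ blockwise, and reduce to the inductive hypothesis on $\pi_A$ and $\pi_B$. Your execution is somewhat cleaner than the paper's: you state the structural decompositions of $\Lambda_\pi$ (concatenations) and $V_\pi$ (shuffles) up front and read off both existence and uniqueness of $(u,v)$ directly from the shuffle pattern $S=w^{-1}(\{1,\dots,m\})$, whereas the paper handles injectivity via reduced decompositions (Lemma~\ref{no interaction}) and surjectivity via an explicit ``shifting'' construction of $v'$, without invoking the cardinality equality $|\Lambda_\pi|\cdot|V_\pi|=n!$ from Theorem~\ref{main theorem}.
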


Since $(u^{-1}v)^{-1} = v^{-1}u$, it is a direct consequence of
Theorem \ref{bijj} that the map $$ \phi': \Lambda_\pi \times V_\pi
\rightarrow \sn $$ defined by $\phi'(u,v) = v^{-1}u$ for $u\leq \pi$
and $v\geq \pi$ is also a bijection.

We use the following lemma to prove this theorem.
\begin{lemma} \label{no interaction}
{If $\pi = a_1a_2\cdots a_n \in\sn$ is a separable permutation with
  $a_1<a_n$, and $(i_1,\dots,i_\ell)\in R(\pi)$, then there exists an
  integer $m$ with $1 \leq m < n$ such 
that none of the simple transpositions $s_{i_j}$ transposes an element
in $A_\pi = \{1,2,\dots, m\}$ with an element in $B_\pi = \{m+1,
\dots, n\}$. In other words, there is no interaction between the sets
$A_\pi$ and $B_\pi$.} 
\end{lemma}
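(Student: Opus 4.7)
The plan is to identify the splitting value $m$ via Lemma \ref{separable}, and then exploit two facts: every inversion of $\pi$ lies entirely inside $A_\pi$ or entirely inside $B_\pi$, and the pairs of values swapped in any reduced decomposition of $\pi$ are precisely its inversions.

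First I would apply Lemma \ref{separable}. Since $\pi$ is separable and $a_1 < a_n$, the second alternative of that lemma (which would force $a_1 > m \geq a_n$) is excluded, so we may write $\pi = \pi_A\pi_B$ with $\pi_A$ a permutation of $A_\pi = \{1,\dots,m\}$ and $\pi_B$ a permutation of $B_\pi = \{m+1,\dots,n\}$. This is the $m$ claimed in the lemma. The inversions of $\pi$ are then easy to describe: if $i<j$ and $a_i > a_j$, then $a_i \leq m$ forces $a_j < a_i \leq m$, while $a_j > m$ forces $a_i > a_j > m$. Hence every inversion of $\pi$ has both entries in $A_\pi$ or both entries in $B_\pi$; no inversion ``mixes'' the two blocks.

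The key step is to show that in any reduced word $(i_1,\dots,i_\ell) \in R(\pi)$, the unordered pair of values swapped at step $j$ is an inversion of $\pi$. I would consider the saturated chain $\mathrm{id} = \pi_0 \covby \pi_1 \covby \cdots \covby \pi_\ell = \pi$ with $\pi_j = \pi_{j-1}s_{i_j}$. Since $\ell$ strictly increases at each cover, the values $x,y$ sitting at positions $i_j, i_j+1$ of $\pi_{j-1}$ must satisfy $x<y$, so $s_{i_j}$ creates the new inversion $(y,x)$. This inversion cannot be destroyed at any subsequent step: swapping $y$ and $x$ back into increasing order would drop the length, contradicting the cover relation. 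Hence $(y,x)$ remains an inversion of $\pi$.

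Combining the two observations finishes the proof: no $s_{i_j}$ can transpose an element of $A_\pi$ with an element of $B_\pi$, since such a swap would produce a mixed inversion of $\pi$, and none exists. The main obstacle is the ``inversion persistence'' step; everything else is a direct application of Lemma \ref{separable} and an elementary case analysis.
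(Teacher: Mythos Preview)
Your proof is correct, and the underlying mechanism is the same as the paper's: in a reduced word, each step increases the length, so a swap of $a\in A_\pi$ with $b\in B_\pi$ can never be ``undone'' on the way to $\pi$, yet in $\pi$ every element of $A_\pi$ precedes every element of $B_\pi$. The packaging differs. The paper argues by contradiction, looking at the \emph{last} swap between some $a\in A_\pi$ and $b\in B_\pi$; since no later step exchanges $A_\pi$ with $B_\pi$, the relative order of $a$ and $b$ after that swap agrees with their order in $\pi$ (namely $a$ left of $b$), so the swap took $b,a\mapsto a,b$ and decreased the length --- contradiction. You instead invoke the general (and standard) fact that the set of value pairs swapped along any reduced word for $\pi$ is exactly the inversion set of $\pi$, then observe that $\pi$ has no mixed inversions. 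Your route is a little more conceptual and reusable; the paper's is a quick ad hoc contradiction. Both rest on Lemma~\ref{separable} to produce $m$ and on the length-increasing property of covers.
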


The proof of Lemma \ref{no interaction} can be achieved easily from
the definition of weak order. 
\begin{proof}

If the lemma does not hold, then in the sequence of all simple
transpositions there exists a nonempty subsequence consisting of
simple transpositions between the letters in $A_\pi$ and the letters
in $B_\pi$. Suppose the last transposition in this subsequence is
between $a \in A_\pi$ and $b \in B_\pi$. 
From Proposition \ref{separable} we know that $a$ is to the left of
$b$. Since 
$a < b$, by the definition of weak order the permutation after the
transposition is covered by the permutation before swapping $a$ and $b$, which leads to a contradiction. 
\end{proof}

\vspace{0.2cm}
\begin{proof}[Proof of Theorem \ref{bijj}]
When $a_1 < a_n$, by Lemma \ref{separable} we can write $\pi = \pi_A
\pi_B$ where $\pi_A$ is a separable permutation of $\{1,2,\dots, m\}$
for some $m > 0$. 

For the injectivity part, we want to show that there do not exist two
different pairs $(u_1, v_1), (u_2, v_2) \in \Lambda_\pi \times V_\pi$
such that $u_1^{-1}v_1 = 
u_2^{-1}v_2$. It is sufficient to show that $u^{-1}\pi \neq \pi^{-1}v$
for all $(u, v) \in \Lambda_\pi \times V_\pi$, and $u, v \neq \pi$. 

Let $r_1(C_\Lambda) = (i_1, i_2, \dots, i_{k_1})$ be the reduced
decomposition of $u^{-1}\pi$ and $r_2(C_V) = (j_1, j_2, \dots,
j_{k_2})$ be the reduced decomposition of $\pi^{-1}v$. We need only
consider the situation when $k_1 = k_2$.  

Since $\pi_A$ is a permutation
of $\{1,2,\dots, m\}$ and $\pi_B$ is a permutation of $\{m+1, \dots,
n\}$, by Lemma \ref{no interaction} we can write $u = u_Au_B$ where
$u_A$ is a permutation of $\{1,2,\dots, m\}$ and $u_B$ is a
permutation of $\{m+1, \dots, n\}$.  Furthermore, we can also write the reduced
decomposition of $u^{-1}\pi$ as a concatenation of the reduced
decompositions of 
$u_A^{-1}\pi_A$ and $u_B^{-1}\pi_B$. Accordingly, if there exists $v
\geq \pi$ such that $\pi v = u^{-1}\pi $, then we can write $v$ as a concatenation of two subpermutations $ 
v_A, v_B$, and the reduced decomposition for $\pi^{-1}u$ is a concatenation of 
$\pi_A^{-1}v_A$ and $\pi_B^{-1}v_B$. Hence in order to
have $u^{-1}\pi = \pi v$, we must have 
 $$u_A^{-1}\pi_A
   =\pi_A^{-1}v_A \text{ 
  and  } u_B^{-1}\pi_B=\pi_B^{-1}v_B.$$ %
Thus we need only consider the case in which the size of the
permutation is less than $n$.

For the surjectivity part, we want to show that, for each permutation
$w \in \sn$, there exists $(u,v)\in \Lambda_\pi \times V_\pi$ such
that $u^{-1}v = w$. 

Let $w\in \mathfrak{S}_n$ be as in Proposition~\ref{above}(ii).  
Let $w_1$ be the sub-permutation of $w$ which consists of the letters
$\{1,2,\dots, m\}$, and let $w_2$ be the sub-permutation of $w$ which
consists of $\{m+1,m+2,\dots, n\}$. 
By the inductive hypothesis, there exist $(u_1, v_1) \in
\Lambda_{\pi_A} \times V_{\pi_A}$  and  $(u_2, v_2) \in
\Lambda_{\pi_B} \times V_{\pi_B}$ such that $$u_1^{-1}v_1 = w_1 \text{
  and } u_2^{-1}v_2= w_2.$$ It follows that $$(u_1u_2)^{-1}(v_1v_2) = w_1w_2,$$      
and $(u_1u_2, v_1v_2) \in \Lambda_{\pi} \times V_{\pi}.$

We now show that we can find $v' \geq v_1v_2$ such that
$$(v_1v_2)^{-1}v' = (w_1w_2)^{-1}w.$$  
Then it follows that for any arbitrary $w$, there exists a $(u_1u_2,
v') \in \Lambda_{\pi} \times V_{\pi}$ and $$(u_1u_2)^{-1}v'
=(u_1u_2)^{-1}(v_1v_2)(v_1v_2)^{-1}v' = (w_1w_2)(w_1w_2)^{-1}w = w.$$
We will show an explicit way to find $v'$. 

Let $A_1 < A_2 < \cdots <A_m$ be the positions in $\pi$ that are
occupied by the letters $\{1,2,\dots, m\}$. We start by shifting the letters
$\{1,2,\dots, m\}$ in both $v_1v_2$ and $w_1w_2$ to the positions
indexed by $A_1, A_2, \dots, A_m$. That is, we move the letters at the
$m$th position in $v_1v_2$ and $w_1w_2$ to the position indexed by
$A_m$, and then move the letter at the $(m-1)$-st position to the
position indexed by $A_{m-1}$, and so on. Finally, we move the letter
at the first position to the position indexed by $A_1$.  
Recall that $v_1$ and $w_1$ are permutations of $\{1,2,\dots, m\}$
and $v_2$ and $w_2$ are permutations of $\{m+1,\dots, n\}$. 
Since $A_1 < A_2 < \cdots < A_m,$ it is easy to show that during the
shifting process, all the transpositions are between a letter in
$\{1, 2, \dots, m\}$ and a letter in $\{m+1, m+2, \dots, n\}$, and
that after each transposition, the length of the permutation increases
by $1$. This process thus turns $w_1w_2$ into $w$ and $v_1v_2$ into
another permutation, which we set to be $v'$. Accordingly, by the inductive
hypothesis and this shifting process, we have an explicit way to find
$v'$ such that $$(u_1u_2)^{-1}v' = w.$$ 

When $a_1 > a_n$, we use the complement of the permutation, and the
rest of the proof is similar. 
\end{proof}


\subsection{Explicit Formulas for $F(\Lambda_\pi, q)$ and\ $F(V_\pi,q)$}
Based on Proposition \ref{below} and Proposition \ref{above}, we
introduce a convenient method to find the explicit formulas for
$F(\Lambda_\pi, q)$ and $F(V_\pi,q)$. 

The most convenient way is to use a \emph{separating tree}. We
define it recursively as follows.

Let $\pi=a_1a_2\cdots a_n$ be a separable permutation. 

When $n=2$, its separating tree $T_\pi$ is an ordered binary tree with
the left leaf $a_1$ and right leaf $a_2$. 

When $n > 2$, by Lemma \ref{separable} we can write $\pi = \pi_A
\pi_B$ where $\pi_A$ and $\pi_B$ are separable permutations with size
strictly smaller than $n$. Then $T_\pi$ is an ordered binary tree,
with the subtree rooted at the left child of the root, being
$T_{\pi_A}$, and the subtree rooted at the right child of the root,
being $T_{\pi_B}$.  

Since there might be more than one  way to write $\pi = \pi_A\pi_B$, a
separable permutation can have more than one separating tree. Also,
only the separable permutations have separating trees. 

The definition of the separating tree $T_\pi$ gives the following
lemma, which is easy to prove. 

\begin{lemma}
For any node in $T_\pi$, the leaves of the subtree rooted at that node
form a subrange, a set of consecutive integers. 
\end{lemma}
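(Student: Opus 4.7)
The plan is to prove this by straightforward induction on the number $n$ of leaves of $T_\pi$, exploiting the recursive definition of the separating tree together with Lemma \ref{separable}. Since the hypothesis is about all nodes of $T_\pi$, I will make sure the induction statement is strong enough by allowing $\pi$ to be a separable word on any set of $n$ consecutive integers (not just $\{1,\dots,n\}$); this is harmless because Lemma \ref{separable} is about the relative order of letters and extends verbatim to any such word.

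For the base case $n=2$, the tree $T_\pi$ has a root with exactly two leaves $a_1,a_2$, which are the two integers on which $\pi$ is defined; the root's leaf set is a subrange by assumption and each single leaf is trivially a subrange. For the inductive step, suppose $n>2$ and the claim holds for all separable words of size less than $n$ on any set of consecutive integers. By (the extended) Lemma \ref{separable} we may write $\pi=\pi_A\pi_B$ where, up to swapping the two cases, $\pi_A$ is a separable word on $\{s+1,\dots,s+m\}$ and $\pi_B$ is a separable word on $\{s+m+1,\dots,s+n\}$ (where $\pi$ is on $\{s+1,\dots,s+n\}$), each of size strictly less than $n$.

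By construction the root of $T_\pi$ has leaf set equal to all of $\{s+1,\dots,s+n\}$, which is a subrange. Any other node $v$ of $T_\pi$ lies in exactly one of the two subtrees $T_{\pi_A}$ and $T_{\pi_B}$ hanging off the root. Applying the inductive hypothesis to whichever of $\pi_A,\pi_B$ contains $v$, the leaves of the subtree of $T_\pi$ rooted at $v$ coincide with the leaves of the corresponding subtree of $T_{\pi_A}$ or $T_{\pi_B}$, and hence form a set of consecutive integers. This covers every node of $T_\pi$ and completes the induction.

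There is really no hard step here: the only point that needs a line of care is phrasing the inductive hypothesis so that it applies to separable words on an arbitrary interval of integers, since the children $\pi_A,\pi_B$ are not, strictly speaking, elements of $\mathfrak{S}_m$ and $\mathfrak{S}_{n-m}$ but permutations of consecutive subranges. Once that is set up, the argument is a direct unwinding of the recursive definition of $T_\pi$.
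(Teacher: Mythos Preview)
Your induction on the recursive definition of $T_\pi$ is correct and is exactly the argument the paper intends when it says the lemma follows from ``the definition of the separating tree''; the paper does not write out a proof. One trivial addendum: the decomposition $\pi=\pi_A\pi_B$ can produce a factor of size $1$, so you should also record the size-$1$ base case (a single leaf is a one-element subrange), which is immediate.
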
 

This lemma allows us to classify the nodes in $T_\pi$ into two
categories. A node is \emph{negative} if the subrange of
its left child is greater than that of its right child. \emph{Positive}
 node is defined analogously.  
Figure 3 shows a separating tree for $4231$, which has two negative nodes and one positive node, as labeled in the figure. 
\vspace{0.2cm}
\begin{center}
\scalebox{0.75}{
$$\xymatrix@!C@!R@R=12pt@C=-26pt@M=0pt@H=0pt@W=0pt{
& & & \txt<4pc>{Negative\\Node} \ar@{-}[dddlll] \ar@{-}[dr] & & & \\
& &&  &  \txt<4pc>{Negative\\Node} \ar@{-}[dl] \ar@{-}[ddrr] & &\\
& & & \txt<4pc>{Positive\\Node} \ar@{-}[dl]  \ar@{-}[dr] & & & \\
4 & & 2 & & 3 & & 1  \\
}\label{treeS}$$}
\end{center}
\begin{center}{ Figure 3.  The separating tree for 4231}\end{center}
\vspace{0.3cm}

\begin{theorem}\label{formula}
Let  $S^{-}(\pi) = \{$all negative nodes $V_i$ in $T_\pi$ whose
parents are not negative$\}$ and $S^{+}(\pi)=\{$all positive
nodes $V_j$ in $T_\pi$ whose parents are not positive$\}$. Let 
$V_0$ be the root of the tree, and $V_0$ is not in either $S^{-}(\pi)$ nor $S^{+}(\pi)$. 
Let $N(V_k)$ denote the number of leaves in the subtree rooted at
$V_k$. In particular, we define $\prod_{V_i \in \varnothing}{
  [N(V_i)]!} = 1$. Then 

\begin{eqnarray} \label{formulabelow}
F(\Lambda_w,q) =\left\{\begin{array}{lr}
\dfrac{\prod_{V_i \in S^{-}(\pi)}{[N(V_i)]! }} { \prod_{V_j \in
    S^{+}(\pi)}{ [N(V_j)]!}}, &V_0\text{ is a positive node;} \\[2em] 
\dfrac{\prod_{V_i \in S^{-}(\pi)}{[N(V_i)]! }} { \prod_{V_j \in
    S^{+}(\pi)}{ [N(V_j)]!}}[N(V_0)]!, &V_0\text{ is a negative
  node}. 
\end{array}\right.
\end{eqnarray}

\begin{eqnarray} \label{formulaabove}
F(V_w,q) =\left\{\begin{array}{lr}
\dfrac{\prod_{V_i \in S^{+}(\pi)}{[N(V_i)]! }} { \prod_{V_j \in
    S^{-}(\pi)}{ [N(V_j)]!}}[N(V_0)]!, &V_0\text{ is a positive
  node;}\\[2em]
\dfrac{\prod_{V_i \in S^{+}(\pi)}{[N(V_i)]! }} { \prod_{V_j \in
    S^{-}(\pi)}{ [N(V_j)]!}}, &V_0\text{ is a negative node}. 
\end{array}\right.
\end{eqnarray}

\end{theorem}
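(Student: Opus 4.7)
The plan is to prove (\ref{formulabelow}) by induction on $n$ using Proposition~\ref{below}, then obtain (\ref{formulaabove}) for free by dividing $[n]!$ by (\ref{formulabelow}) via Theorem~\ref{main theorem}. The base case $n=2$ is immediate: $T_\pi$ has no internal nodes other than the root, so $S^{-}(\pi)=S^{+}(\pi)=\varnothing$, and (\ref{formulabelow}) evaluates to $1$ for $\pi=12$ (positive root) and to $[2]!$ for $\pi=21$ (negative root), matching $F(\Lambda_{12},q)=1$ and $F(\Lambda_{21},q)=1+q$.

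For the induction, write $\pi=\pi_A\pi_B$ according to the root $V_0$ of $T_\pi$, and let $T_A, T_B$ be the subtrees at the two children $V_0^A, V_0^B$ of $V_0$, with $m=N(V_0^A)$. Proposition~\ref{below} reduces $F(\Lambda_\pi,q)$ either to $F(\Lambda_{\pi_A},q)F(\Lambda_{\pi_B},q)$ (if $V_0$ is positive) or to $\begin{bmatrix}n\\ m\end{bmatrix}F(\Lambda_{\pi_A},q)F(\Lambda_{\pi_B},q)$ (if $V_0$ is negative). The key structural remark is that every node of $T_\pi$ other than $V_0, V_0^A, V_0^B$ has the same parent in $T_\pi$ as in its subtree, so its membership in $S^{\pm}$ is unchanged; only the boundary nodes $V_0^A$ and $V_0^B$ require tracking, since each is excluded as a root in its own subtree but may now join $S^{-}(\pi)$ or $S^{+}(\pi)$ according to the signs of $V_0$ and itself.

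The only somewhat tedious obstacle is the resulting case check on the four sign pairs $(V_0, V_0^A)$, independently combined with those for $(V_0, V_0^B)$. The pattern is this: the factor $[N(V_0^A)]!$ that appears in the inductive formula for $F(\Lambda_{\pi_A},q)$ exactly when $V_0^A$ is negative either gets absorbed into the $S^{-}(\pi)$-product (when $V_0$ is positive and $V_0^A$ is negative) or cancels against a factor $[m]!$ from the $q$-binomial (when $V_0$ is negative), and symmetrically on the positive/denominator side; the same analysis applies to $V_0^B$. Combining these checks shows that (\ref{formulabelow}) evaluated on $T_\pi$ matches the Proposition~\ref{below} recursion, closing the induction.

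Finally, (\ref{formulaabove}) follows at once: by Theorem~\ref{main theorem} and the observation $[N(V_0)]!=[n]!$, one has $F(V_\pi,q)=[n]!/F(\Lambda_\pi,q)$, and substituting (\ref{formulabelow}) swaps the numerator and denominator products while leaving behind a factor $[N(V_0)]!$ exactly when $V_0$ is positive, yielding (\ref{formulaabove}).
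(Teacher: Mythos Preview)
Your proof is correct and, for formula~(\ref{formulabelow}), follows essentially the same inductive scheme as the paper: both argue by induction on the separating tree using Proposition~\ref{below}, and your careful tracking of how the boundary children $V_0^A,V_0^B$ migrate into or out of $S^{\pm}(\pi)$ makes explicit the case analysis that the paper only gestures at. One small point worth spelling out is the degenerate case where $V_0^A$ or $V_0^B$ is a leaf (so $\pi_A$ or $\pi_B$ has size~$1$); a leaf carries neither sign, but since $[1]!=1$ it contributes trivially to every product, so the argument goes through unchanged.

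Where you do diverge from the paper is in deriving formula~(\ref{formulaabove}). The paper runs a second, parallel induction using Proposition~\ref{above} (equations~(\ref{abovesb}) and~(\ref{abovebs})) to establish~(\ref{formulaabove}) directly. You instead invoke Theorem~\ref{main theorem} to get $F(V_\pi,q)=[n]!/F(\Lambda_\pi,q)$ and read~(\ref{formulaabove}) off from~(\ref{formulabelow}). This is legitimate---Theorem~\ref{main theorem} is proved earlier from Propositions~\ref{below} and~\ref{above}, so there is no circularity---and it is a bit more economical, since it avoids repeating the entire sign-tracking case analysis for the upper interval. The paper's route, on the other hand, keeps Theorem~\ref{formula} logically independent of Theorem~\ref{main theorem}, which has the mild expository advantage that the product identity $F(\Lambda_\pi,q)F(V_\pi,q)=[n]!$ then becomes visible as an immediate consequence of the two explicit formulas.
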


\begin{example} Let $w = 4231$. Its separating tree is shown in Figure
  3. It has one negative node with no parent, one negative node with
  a negative parent node, and one positive node with a negative parent
  node. Thus $F(\Lambda_w, q) = [4]! / [2]!$, and $F(V_w, q)
  = [2]!/[1]!$. 
\end{example}

\begin{proof}[Proof of Theorem \ref{formula}]
Let $\pi = a_1a_2\cdots a_n$ be a separable permutation. We can use
induction to prove Theorem \ref{formula}. 

By the definition of $N(V)$, we have $N(V_0) = n$. 

When $a_1 < a_n$, we write $\pi =\pi_A\pi_B$ where $\pi_A$ is a
permutation of $\{1, 2,\dots, m\}$. The root $V$ of $T_\pi$ has two
children with the left child $V_L$ having leaves $\{1,2,\dots, m\}$
and the right child $V_R$ having leaves $\{m+1, m+2, \dots, n\}$. Thus
$V$ is a positive node. Let $T_L$ be the subtree rooted at $V_L$ and
$T_R$ be the subtree rooted at $V_R$. Applying formula
(\ref{formulabelow}) to $\pi_A$ and $\pi_B$, together with
(\ref{belowsb}) and (\ref{abovesb}), we can prove (\ref{formulabelow})
and (\ref{formulaabove}) by induction. 

When $a_1 > a_n$, the root of $T_\pi$ is a negative node. The rest of
the proof is similar to the case above when $a_1 < a_n$. 
\end{proof}
 

More specifically, when the permutation $\pi = a_1a_2\cdots a_n$ is
$231$-avoiding (a 231-avoiding permutation requires more restrictions
than a general separable permutation), a more direct formula for
$F(\Lambda_\pi,q)$ can be given.

\begin{corollary}[explicit formula for $F(\Lambda_\pi,q)$ for a
  231-avoiding permutation]  
\label{explicit 231} Let $\pi = a_1 a_2 \cdots a_n$ be $231$-avoiding,
and $a_{c_i+i}$ be the first element to the right of $a_i$ in $\pi$
satisfying $a_{c_i+i}>a_i$, setting $a_{n+1}=\infty$. Then 
\[ F(\Lambda_\pi,q) = \prod_{i=1}^n [c_i]. \label{eq:rgf}
\]\end{corollary}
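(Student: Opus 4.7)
The plan is induction on $n$, using Proposition \ref{below} as the engine. First, $231$-avoiding permutations are separable: a $3142$-pattern at positions $i<j<k<h$ (so $a_j<a_h<a_i<a_k$) induces a $231$-pattern at positions $(i,k,h)$ via $a_h<a_i<a_k$, and a $2413$-pattern $a_k<a_i<a_h<a_j$ induces a $231$-pattern at $(i,j,k)$ via $a_k<a_i<a_j$. Hence Proposition \ref{below} applies, and the class of $231$-avoiding permutations is closed under the decompositions of Lemma \ref{separable} (no $231$-pattern can straddle two blocks whose letter-ranges $\{1,\dots,m\}$ and $\{m+1,\dots,n\}$ are fully separated, since the requisite middle-maximum/last-minimum cannot be realized across the gap). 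The base case $n=1$ is immediate: $\Lambda_\pi$ is a point and $c_1=1$ by the convention $a_{n+1}=\infty$.

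For $n\ge 2$ I split on the sign of $a_n-a_1$. Suppose first $a_1<a_n$, and write $\pi=\pi_A\pi_B$ with $\pi_A$ on $\{1,\dots,m\}$ and $\pi_B$ on $\{m+1,\dots,n\}$. The crux is to check that the $c_i$'s transfer cleanly under this decomposition. For $i\le m$, either the first right-neighbour of $a_i$ exceeding $a_i$ already lies in $\pi_A$ (so $c_i$ in $\pi$ matches $c_i'$ in $\pi_A$), or all of $a_{i+1},\dots,a_m$ are $\le a_i$; in the latter case, in $\pi$ we first encounter $a_{m+1}\in\pi_B$, automatically larger than $a_i$, giving $c_i=m+1-i$, which is exactly what the convention $a_{m+1}=\infty$ assigns inside $\pi_A$. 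For $i>m$ the entire search lives in $\pi_B$, so $c_i$ agrees with the corresponding $c_j''$ of $\pi_B$. Combining with Proposition \ref{below}(i) and the inductive hypothesis,
\[
\prod_{i=1}^{n}[c_i] \;=\; \prod_{i=1}^{m}[c_i']\;\prod_{j=1}^{n-m}[c_j''] \;=\; F(\Lambda_{\pi_A},q)\,F(\Lambda_{\pi_B},q) \;=\; F(\Lambda_\pi,q).
\]

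For $a_1>a_n$, the structural observation is that $231$-avoidance forces $a_1=n$: if $a_1=k<n$ and $n$ lies at position $j$, then $a_n<a_1=k<n$ forces $1<j<n$, and the triple $(1,j,n)$ gives a $231$-pattern. I then take $m=1$, $\pi_A=(n)$, and $\pi_B=a_2\cdots a_n$, a $231$-avoiding permutation of $\{1,\dots,n-1\}$. Proposition \ref{below}(ii) gives $F(\Lambda_\pi,q)=[n]\,F(\Lambda_{\pi_B},q)$. On the product side $c_1=n$ since no right-neighbour of $a_1=n$ can exceed $n$, so $a_{n+1}=\infty$ is the first ``larger'' entry; and for $i\ge 2$ the search for the first right-neighbour of $a_i$ exceeding $a_i$ never revisits $a_1$, so $c_i$ in $\pi$ equals the $c$-value at position $i-1$ in $\pi_B$. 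Hence $\prod_{i=1}^n[c_i]=[n]\prod_{j=1}^{n-1}[c_j'']=[n]\,F(\Lambda_{\pi_B},q)=F(\Lambda_\pi,q)$ by induction.

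The main obstacle is purely bookkeeping: in each case I must reconcile the internal convention ``$a_{m+1}=\infty$'' (or ``$a_{n+1}=\infty$'') used when computing $c_i$'s inside a block with the actual letter sitting at the corresponding boundary position of $\pi$, so that the product $\prod[c_i]$ factors along the decomposition. Once this matching is nailed down, everything else follows directly from Proposition \ref{below} and the inductive hypothesis.
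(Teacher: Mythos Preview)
Your proof is correct and follows essentially the same inductive strategy as the paper's: both recurse via Proposition~\ref{below} and check that the $c_i$'s transfer cleanly across the block decomposition, with the paper organizing the case split around the position of the letter $n$ (which is exactly your dichotomy $a_1\lessgtr a_n$, since you show $a_1>a_n$ forces $a_1=n$). If anything, your write-up is more explicit than the paper's in justifying that $231$-avoidance implies separability and that $a_1=n$ in the descending case.
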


Before proving this proposition, we give an example to explain the
notation in the formula. 
\begin{example}
Let $\pi = a_1a_2\cdots a_6=142365$. We set $a_7 = \infty$. For $a_1
=1$, letter $4$ is the first one greater than $1$ and to the right of $a_1$; 
the distance between these two integers, $c_1$, is thus
$2-1=1$. Similarly, $c_2 = 5-2=3, c_3 = 4-3=1, c_4 = 5-4=1, c_5=7-5=2,
c_6=7-6=1$. Thus the generating function is $F(\Lambda_{142365}, q) =
\prod_{i=1}^{6}{[c_{i}] } = [1][3][1][1][2][1] =(q^2+q+1)(q+1)$. 
\end{example}

\begin{proof}
By Lemma \ref{no interaction} we know that when $\pi$ is
$231$-avoiding, either $\pi$ has the greatest letter $n$ at its first
position, or $n$ is at the $(m+1)$-st position with $m>0$.  Thus we
can write $\pi = \pi_A\pi_B$ where $\pi_A$ is a 231-avoiding
permutation of $\{1,\dots, m\}$ and $\pi_B$ is a 231-avoiding
permutation of $\{m+1,\dots, n\}$. Then we can construct the
separating tree by repeatedly applying the following steps.

For a separating tree with root $V_0$, we first decide its left child
$V_L$ and right child $V_R$ by identifying the position of the
greatest letter in $\pi$, i.e., finding $m$ such that $a_{m+1} =
n$.  

When $m=0$, the subtree rooted at $V_L$ has only one leaf $a_1 = n$,
while the subtree rooted at $V_R$ is the separating tree of the
permutation $a_2a_3\cdots a_n$, which we will construct similarly. 

When $m > 0$, the subtree rooted at $V_L$ is the separating tree for
the permutation $a_1a_2\cdots a_m$, while the subtree rooted at $V_R$
is the separating tree for the permutation $a_{m+1}\cdots a_n$. We
then construct these two separating tree similarly.

In the first case, $V_0$ is a negative node. We already know that $c_1=n$ and 
$$F(\Lambda_{\pi}, q) = [n]\cdot F(\Lambda_{a_2a_3\cdots a_n},q),$$ and
$c_1 = n.$  

In the second case, $V_0$ is a positive node. We have $c_1 = n$
and $$F(\Lambda_{\pi}, q) = F(\Lambda_{a_1a_2\cdots a_m},q)
F(\Lambda_{a_{m+1}a_{m+2}\cdots a_n},q).$$   
We also know that, for a letter $a$ in $\{1,2,\dots, m\}$, the
distance between $a$ and the first letter greater than $a$ and to its
right is the same in both $\pi$ and $a_1 a_2\cdots a_m$.  

The rest of the proof can be completed by induction.
\end{proof}

\vskip 0.2cm
Since we know that $F(\Lambda_{\pi}, q)F(V_{\pi}, q) = [n]!$, as well
as the explicit formula for $F(\Lambda_{\pi}, q)$, we can also obtain
an explicit formula for $F(V_{\pi}, q)$. 

By symmetry, we can obtain analogous explicit formulas when the
permutation avoids any of the patterns 132, 231, 312, or 213. 

The following two lemmas are standard results about unimodality; see
for instance \cite{rs:unim}.

\begin{lemma} \label{young}
The $q$-binomial coefficient $\begin{bmatrix} n\\ m \end{bmatrix}$ is 
rank-unimodal and rank-symmetric. 
\end{lemma}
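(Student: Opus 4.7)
The plan is to prove both claims using the standard combinatorial interpretation of the $q$-binomial coefficient: $\begin{bmatrix}n\\ m\end{bmatrix}=\sum_{\lambda}q^{|\lambda|}$, where the sum ranges over all integer partitions $\lambda$ whose Young diagram fits inside the $m\times(n-m)$ rectangle. Under this interpretation, the coefficient of $q^k$ equals the number $p(k;m,n-m)$ of such partitions of size $k$, and the rank of the underlying poset $L(m,n-m)$ runs from $0$ to $m(n-m)$.

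For rank-symmetry, I would exhibit the bijection $\lambda\mapsto\lambda^{c}$, where $\lambda^{c}$ is the complement of $\lambda$ inside the $m\times(n-m)$ rectangle (the partition whose parts are $n-m-\lambda_{m},n-m-\lambda_{m-1},\dots,n-m-\lambda_{1}$). This is an involution on partitions fitting in the rectangle, and it sends a partition of size $k$ to one of size $m(n-m)-k$; hence the coefficients of $q^{k}$ and $q^{m(n-m)-k}$ in $\begin{bmatrix}n\\ m\end{bmatrix}$ coincide, which is exactly rank-symmetry.

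For rank-unimodality, I would appeal to the classical theorem of Sylvester (1878), which asserts that the sequence $p(0;m,n-m),p(1;m,n-m),\dots,p(m(n-m);m,n-m)$ is unimodal. The cleanest modern proof uses the action of $\mathfrak{sl}_{2}(\mathbb{C})$ on the weight spaces of an irreducible representation (or equivalently the hard Lefschetz theorem applied to the Grassmannian $\mathrm{Gr}(m,n)$), from which one extracts injections from the $k$-th rank to the $(k+1)$-st rank for all $k<m(n-m)/2$. A purely combinatorial alternative is O'Hara's explicit chain decomposition of $L(m,n-m)$ into symmetric chains, which implies unimodality directly. Since the statement is invoked as a standard fact and a reference \cite{rs:unim} is already given, I would cite that reference in place of reproducing either argument.

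The only real obstacle is rank-unimodality, since rank-symmetry is essentially immediate from partition-complementation. A self-contained elementary proof of unimodality is genuinely subtle---no routine induction on $n$ or $m$ works---which is why every standard treatment appeals either to a representation-theoretic/geometric input or to O'Hara's nontrivial chain decomposition. For the purposes of this paper the right move is to invoke the cited reference rather than to redo that work.
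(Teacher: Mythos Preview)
Your proposal is correct and matches the paper's treatment: the paper does not prove this lemma at all but simply records it as a standard result and points to \cite{rs:unim}, exactly as you conclude one should do. Your added sketch (complementation for symmetry, Sylvester/$\mathfrak{sl}_2$/O'Hara for unimodality) is accurate background but goes beyond what the paper itself supplies.
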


\begin{lemma}  \label{product of unimodal}  
Let $F(q)$ and $G(q)$ be symmetric unimodal polynomials with
nonnegative real coefficients. Then $F(q)G(q)$ is also symmetric and
unimodal. \end{lemma}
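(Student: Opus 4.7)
The plan is to prove symmetry and unimodality separately, reducing the latter to an elementary lattice-point count via a canonical decomposition of a symmetric unimodal polynomial into ``centered interval'' polynomials. For symmetry, let $d=\deg F$ and $e=\deg G$. Symmetry of $F$ and $G$ means $q^d F(1/q)=F(q)$ and $q^e G(1/q)=G(q)$; multiplying yields $q^{d+e}(FG)(1/q)=F(q)G(q)$, so $FG$ is palindromic of degree $d+e$, hence symmetric about $(d+e)/2$.

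For unimodality, write $F(q)=\sum_{i=0}^{d} c_i q^i$, set $c_{-1}=0$, and define $\lambda_i=c_i-c_{i-1}$ for $0\le i\le \lfloor d/2\rfloor$. By unimodality $\lambda_i\ge 0$, and one checks by telescoping that
$$F(q)=\sum_{i=0}^{\lfloor d/2\rfloor} \lambda_i\, \theta_{i,d}(q), \qquad \theta_{i,d}(q):=q^i+q^{i+1}+\cdots+q^{d-i}.$$
Each basis polynomial $\theta_{i,d}$ is symmetric and unimodal about $d/2$. Apply the same decomposition to $G$ to get $G(q)=\sum_j \mu_j\, \theta_{j,e}(q)$ with $\mu_j\ge 0$. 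Because a nonnegative linear combination of symmetric unimodal polynomials sharing the same center of symmetry is again symmetric and unimodal about that center (the partial sums of coefficients increase then decrease termwise), it suffices to verify that each product $\theta_{i,d}(q)\,\theta_{j,e}(q)$ is symmetric and unimodal about $(d+e)/2$.

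Now $\theta_{i,d}(q)\,\theta_{j,e}(q)=q^{i+j}\,[d-2i+1]\,[e-2j+1]$, so the problem reduces to showing that $[a]\cdot[b]=(1+q+\cdots+q^{a-1})(1+q+\cdots+q^{b-1})$ is symmetric and unimodal for all positive integers $a,b$. The coefficient of $q^k$ in this product counts pairs $(r,s)$ with $0\le r<a$, $0\le s<b$, $r+s=k$; assuming $a\le b$, this count equals $\min(k+1,\, a,\, a+b-1-k)$, a manifestly unimodal sequence symmetric about $(a+b-2)/2$.

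The main obstacle is conceptual rather than computational: one must recognize that the cone of symmetric unimodal polynomials with a fixed center is closed under nonnegative linear combinations, so that multiplying $F$ and $G$ termwise through their $\theta$-expansions preserves the desired property once the ``atomic'' case $[a]\cdot[b]$ is settled. All remaining steps (the telescoping decomposition, the palindrome computation, and the lattice-point count) are routine.
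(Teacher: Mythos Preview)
Your argument is correct. The palindrome computation handles symmetry, the telescoping decomposition $F=\sum_i \lambda_i\,\theta_{i,d}$ with $\lambda_i\ge 0$ is valid, the reduction to products $\theta_{i,d}\,\theta_{j,e}=q^{i+j}[d-2i+1][e-2j+1]$ is sound, and the lattice-point count for $[a][b]$ is accurate. The one point worth stating a touch more carefully is the closure step: a nonnegative combination of symmetric unimodal polynomials \emph{with a common center} is again symmetric and unimodal about that center, because the differences of consecutive coefficients up to the center are sums of nonnegative quantities. You invoke this correctly, but it is the place where readers sometimes slip if the centers are not aligned.

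As for comparison with the paper: the paper does not supply a proof of this lemma at all. It simply records it as a standard fact about unimodality and points to Stanley's survey \cite{rs:unim}. Your write-up is precisely the classical argument one finds there (decompose into centered ``interval'' polynomials and reduce to $[a][b]$), so you have faithfully reconstructed the intended proof rather than found an alternative route.
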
 

Lemma \ref{young} and Lemma \ref{product of unimodal} imply the following corollary.
\begin{corollary} \label{sym and unim}
$F(\Lambda_\pi, q)$ and $F(V_\pi, q)$ are rank symmetric and
unimodal. 
\end{corollary}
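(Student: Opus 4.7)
The plan is to induct on $n$, using the recursive structure from Propositions~\ref{below} and~\ref{above}. The base case $n=1$ is immediate, since both $\Lambda_\pi$ and $V_\pi$ are singletons and the generating function is the constant $1$, which is trivially symmetric and unimodal with nonnegative coefficients.

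For the inductive step, I take any separable $\pi\in\sn$ with $n\geq 2$ and apply Lemma~\ref{separable} to write $\pi=\pi_A\pi_B$ with $\pi_A,\pi_B$ separable of sizes $m$ and $n-m$. Propositions~\ref{below} and~\ref{above} then express both $F(\Lambda_\pi,q)$ and $F(V_\pi,q)$ as a product of the two analogous rank generating functions on $\pi_A$ and $\pi_B$, possibly multiplied by the $q$-binomial $\begin{bmatrix}n\\ m\end{bmatrix}$: when $a_1<a_n$ the $q$-binomial appears as a factor in the formula for $V_\pi$, and when $a_1>a_n$ it appears in the formula for $\Lambda_\pi$. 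By the inductive hypothesis, each of the four inner generating functions $F(\Lambda_{\pi_A},q), F(\Lambda_{\pi_B},q), F(V_{\pi_A},q), F(V_{\pi_B},q)$ is symmetric and unimodal with nonnegative coefficients, and Lemma~\ref{young} supplies the same properties for $\begin{bmatrix}n\\ m\end{bmatrix}$. Lemma~\ref{product of unimodal}, applied to the two or three factors, then yields the desired conclusion for $\pi$.

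The only bookkeeping point, rather than a genuine obstacle, is to verify that the symmetry being propagated is centered at the correct midpoint, namely half the rank of $\Lambda_\pi$ or $V_\pi$. This is automatic from the observation that the product of palindromic polynomials is palindromic: if $f(q)=q^a f(q^{-1})$ and $g(q)=q^b g(q^{-1})$, then $(fg)(q)=q^{a+b}(fg)(q^{-1})$. Combined with the additivity of the rank $\ell(\pi)=\ell(\pi_A)+\ell(\pi_B)$ in the concatenation, together with the $m(n-m)$ correction contributed by the $q$-binomial factor in the mixed case, the centers of symmetry line up exactly with the rank of the relevant interval. Hence both statements propagate through the induction simultaneously.
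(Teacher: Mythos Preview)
Your proof is correct and is essentially the approach the paper has in mind: the paper simply asserts that Lemma~\ref{young} and Lemma~\ref{product of unimodal} imply the corollary, and your inductive argument via Propositions~\ref{below} and~\ref{above} is exactly the unpacking of that assertion. The only cosmetic difference is that the paper places the corollary immediately after Theorem~\ref{formula}, suggesting one could also read off the result from the explicit product formula there; but since that formula is itself derived from the same recursion you use, the two routes coincide.
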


Theorem~\ref{formula} determines the number of elements of each rank
$k$ of the poset $\Lambda_\pi$ when $\pi$ is separable. We can
also determine the number of elements that cover $k$ elements.
A \emph{descent} of a permutation $\pi = a_1a_2\cdots a_n \in \sn$ is a
position $i$ with $1\leq i <n$, such that $a_i > a_{i+1}$. Let
$\mathrm{des}(\pi)$ be the number of descents of $\pi$. 
It is easy to see that $\mathrm{des}(\pi)$ is equal to the number of
elements that $\pi$ covers in the weak order on $\sn$. If $P_\pi$ is the
inversion poset of $\pi$, then the enumeration of linear extensions of
$P_\pi$ by number of descents is the same as the enumeration of elements
of $\Lambda_\pi$ in weak order by number of covers. Let
$\Omega_P(m)$ denote the number of order-preserving maps $f\colon P\to
\{1,\dots,m\}$. Then we have the following theorem which relates
$\Omega_P(m)$ with the descent number. The proof can be found in
\cite[Thm.~4.5.14]{EC1}. 

\begin{theorem} \label{des}
For any poset $P$ on $\{1,2,\dots,n\}$, we have
$$ \sum_{m\geq 1}\Omega_P(m)x^m
=\frac{\sum_{\pi\in\mathcal{L}(P)}
          x^{\mathrm{des}(\pi)+1}}{(1-x)^{n+1}}. $$
\end{theorem}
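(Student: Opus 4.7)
The plan is to prove Theorem \ref{des} via the fundamental theorem of $P$-partitions, which decomposes order-preserving maps from $P$ into disjoint classes indexed by linear extensions of $P$. First I would recall that an order-preserving map $f\colon P\to\{1,\dots,m\}$ induces a linear ordering of the elements of $P$ (after breaking ties in a canonical way, taking smaller labels first among equal-valued elements), and that this produces a well-defined linear extension $\pi=a_1a_2\cdots a_n\in\mathcal{L}(P)$. The upshot is a disjoint-union decomposition
\[
\{f\colon P\to\{1,\dots,m\}\ \text{order-preserving}\} \;=\; \bigsqcup_{\pi\in\mathcal{L}(P)} X_\pi(m),
\]
where $X_\pi(m)$ consists of maps $f$ with $f(a_1)\le f(a_2)\le\cdots\le f(a_n)\le m$ and the stricter condition $f(a_i)<f(a_{i+1})$ whenever $i$ is a descent of $\pi$.

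Next I would compute the generating function $\sum_{m\ge 1}|X_\pi(m)|x^m$ for a fixed linear extension $\pi$ with $d=\mathrm{des}(\pi)$ descents. The standard trick is to change variables: set $b_0=f(a_1)$ and $b_i=f(a_{i+1})-f(a_i)$ for $1\le i\le n-1$, where $b_0\ge 1$, $b_i\ge 1$ at descent positions $i$, and $b_i\ge 0$ otherwise; then $f(a_n)=b_0+b_1+\cdots+b_{n-1}$. Summing the geometric series in each $b_i$ and then summing over $m\ge f(a_n)$ (contributing an extra factor $1/(1-x)$) yields
\[
\sum_{m\ge 1}|X_\pi(m)|\,x^m \;=\; \frac{x^{\mathrm{des}(\pi)+1}}{(1-x)^{n+1}}.
\]
Summing over all $\pi\in\mathcal{L}(P)$ using the disjoint decomposition above gives the desired identity.

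The main obstacle is establishing the disjoint-union decomposition cleanly, that is, verifying the fundamental theorem of $P$-partitions in this setting. The subtle point is the tie-breaking rule that sends $f$ to a unique linear extension: one must check that the rule (read off elements in weakly increasing order of $f$-value, and among equal values in increasing order of the underlying labels in $\{1,\dots,n\}$) produces a permutation that is indeed a linear extension of $P$, and that the inequality conditions defining $X_\pi(m)$ exactly correspond to the maps hitting $\pi$ under this rule. Once that bijection is in hand, the remaining generating-function computation is a routine gaps-and-geometric-series argument, and the final sum over linear extensions assembles the numerator $\sum_{\pi}x^{\mathrm{des}(\pi)+1}$ with the common denominator $(1-x)^{n+1}$.
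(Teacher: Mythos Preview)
Your proposal is correct and is precisely the standard $P$-partition argument; the paper does not give its own proof but simply cites \cite[Thm.~4.5.14]{EC1}, whose proof is exactly the decomposition-by-linear-extensions and geometric-series computation you outline. So there is nothing to compare: you have reproduced the referenced proof.
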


Using the recursive structure of $P_\pi$ when $\pi$ is separable
(Corollary~\ref{cor:ppi}) we can give a recursive description of
$\Omega_{P_\pi}(m)$ and thus of the number of elements in
$\Lambda_\pi$ that cover $k$ elements. We do not enter into the
details here.

Our results suggest several open problems. For what permutations
$\pi\in\sn$ is the poset $\Lambda_\pi$ rank-symmetric? When is
$[n]!$ divisible by the rank generating function
$F(\Lambda_\pi,q)$? When is $F(\Lambda_\pi,q)$ a product of cyclotomic
polynomials? R. Stanley has verified that for $n\leq 8$, if
$\Lambda_\pi$ is rank-symmetric then $F(\Lambda_\pi,q)$ is a product
of cyclotomic polynomials, but $F(\Lambda_\pi,q)$ need not divide
$[n]!$. For instance, when $n=8$ there are 8558 separable
permutations, 10728 permutations $\pi$ for which $\Lambda_\pi$ is
rank-symmetric (and hence a product of cyclotomic polynomials), and
961 permutations $\pi$ for which $\Lambda_\pi$ is rank-symmetric but
$F(\Lambda_\pi,q)$ does not divide $[8]!$. A further problem is to
extend our work to the weak order of other Coxeter groups.


\end{document}